\theoremstyle{definition}
\newtheorem{theorem}{Theorem}[subsection]
\newtheorem*{theoremsp}{Theorem \ref{thm:main}}
\newtheorem*{theoremsp2}{Proposition \ref{prop:comm}}
\newtheorem{conjecture}[theorem]{Conjecture}
\newtheorem{lemma}[theorem]{Lemma}
\newtheorem{prop}[theorem]{Proposition}
\newtheorem{cor}[theorem]{Corollary}
\newtheorem{definition}[theorem]{Definition}
\newtheorem{example}[theorem]{Example}
\DeclareMathOperator{\Id}{Id}
\title[Matching Polynomials for Cycles Commute]{A New [Combinatorial] Proof of the Commutativity of Matching Polynomials for Cycles}
\subjclass{05C31}
\author{Garner Cochran \textsuperscript{1,3}}
\address[Cochran]{Department of Mathematics and Computer Science, Berry College, Mount Berry, GA, 30149}
\author{Corbin Groothuis\textsuperscript{1,2}}
\address[Groothuis]{Department of Mathematics, University of Nebraska, Lincoln, NE, 68588}
\author{Andrew Herring\textsuperscript{1}}
\address[Herring]{Department of Mathematics, Western University, London, ON, Canada, N6A 5B7}
\email{aherrin6@uwo.ca}
\author{Ranjan Rohatgi\textsuperscript{1}}
\address[Rohatgi]{Department of Mathematics and Computer Science, Saint Mary's College, Notre Dame, IN, 46556}
\author{Eric Stucky\textsuperscript{1,2}}
\address[Stucky]{School of Mathematics, University of Minnesota, Minneapolis, MN, 55455}
\thanks{1--Research supported in part by National Science Foundation grants \#1604733 and \#1604458 “Collaborative Research: Rocky Mountain-Great Plains Graduate Research Workshop in Combinatorics,” \\ 2--Research supported in part by a generous grant from the Institute for Mathematics and its Applications, \\ 3--Research supported in part by National Security Agency, Workshop Grant \#H98230-16-1-0018 "The 2016 Rocky Mountain – Great Plains Graduate Workshop in Combinatorics"
}
\begin{document}
\begin{abstract}
We prove some functional equations involving the (classical) matching polynomials of path and cycle graphs and the $d$-matching polynomial of a cycle graph.  A matching in a (finite) graph $G$ is a subset of edges no two of which share a vertex, and the matching polynomial of $G$ is a generating function encoding the numbers of matchings in $G$ of each size.  The $d$-matching polynomial is a weighted average of matching polynomials of degree-$d$ covers, and was introduced in a paper of Hall, Puder, and Sawin.  Let $\mathcal{C}_n$ and $\mathcal{P}_n$ denote the respective matching polynomials of the cycle and path graphs on $n$ vertices, and let $\mathcal{C}_{n,d}$ denote the $d$-matching polynomial of the cycle $C_n$.  We give a purely combinatorial proof that $\mathcal{C}_k (\mathcal{C}_n (x)) = \mathcal{C}_{kn} (x)$ en route to proving a conjecture made by Hall: that $\mathcal{C}_{n,d} (x) = \mathcal{P}_d (\mathcal{C}_n (x))$.
\end{abstract}

\maketitle

\section{Introduction}
\subsection{} 
\textit{Ramanujan graphs} are an important class of regular graphs connecting algebraic geometry to graph theory and number theory. Given a $k$-regular graph $G$ and its adjacency matrix $M$ with eigenvalues $\lambda_{n-1}\leq \lambda_{n-2} \leq \ldots \lambda_1 \leq \lambda_0$, we say $G$ is a Ramanujan graph if $\vert \lambda_i \vert \leq 2\sqrt{k-1}$ for all $i\neq 0$. The earliest known examples of such graphs are $K_n$ and $K_{n,n}$. Most constructions of Ramanujan graphs are algebraic in nature, and arguments for infinite families were mostly limited to graphs with a prime power degree of regularity. However, recently, in \cite{Marcus}, Marcus, Spielman, and Srivastava proved that every bipartite Ramanujan graph has a Ramanujan $2$-covering graph, which showed that there were an infinite number of Ramanujan graphs of any degree of regularity. In \cite{Hall}, Hall, Puder, and Sawin generalized this result by proving that every bipartite graph without self-loops has a Ramanujan $d$-covering for every $d$. To do so, they introduced the $d$-matching polynomial.

\begin{definition}\label{DEF1}
Let $G$ be a finite, undirected graph with vertex set $V(G)$ and edge set $E(G)$.  A \textbf{matching} of $G$ is a subset $M \subseteq V(G)$ such that every $v \in V(G)$ is incident to at most one $e \in M$.  Let $a(G,i)$ denote the number of matchings $M$ of $G$ with $|M| = i$.  For a graph $G$ with $|V(G)| = n$, the \textbf{matching polynomial, $\mathcal{M}_G \in \mathbb{ Q } [x]$} is defined by 
$$
    \mathcal{M}_G (x) = \sum_{i=0}^{\lfloor \frac{n}{2} \rfloor} (-1)^i a(G,i) x^{n - 2i} 
$$
The \textbf{$d$-matching polynomial of $G$} is defined by 
$$
	\mathcal{M}_{G, d}(x) = \frac{1}{|\mathcal{L}_d (G)|} \sum_{\lambda \in \mathcal{L}_d (G)} \mathcal{M}_{G_\lambda}(x)
$$
\end{definition}
Here, $\mathcal{L}_d (G)$ is the set of maps from $E(G)$ to $S_d$, where $E(G)$ is the edge set of $G$ and $S_d$ is the set of permutations on $[d]:=\{1,2,\dots,d\}$, and $G_\lambda$ is the $d$-cover corresponding (in a sense which will be made precise in Section \ref{subsection:sdlambda}) to a given labeling $\lambda \in \mathcal{L}_d (G)$. Essentially, the $d$-matching polynomial is the average of all of the (classical) matching polynomials that come from the $d$-covers of a graph $G$. It is known due to  Heilmann and Leib in \cite{Heilmann} that the matching polynomial has only real roots when $G$ has no self-loops, and Hall et al. showed that the $d$-matching polynomial shares this important property \cite{Hall}. Independently, Hall made the following conjecture:
\begin{conjecture} \label{conj:Hall}
$$\mathcal{M}_{C_{n},d}(x) = \mathcal{M}_{P_{nd+n-1}}(x) / \mathcal{M}_{P_{n-1}}(x)$$
\end{conjecture}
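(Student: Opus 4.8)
The plan is to replace Hall's conjecture by the equivalent statement $\mathcal{C}_{n,d}(x) = \mathcal{P}_d(\mathcal{C}_n(x))$ advertised in the abstract, and to prove that directly. First I would dispose of the cosmetic difference between the two forms by establishing the path identity
\[
\mathcal{P}_{nd+n-1}(x) = \mathcal{P}_{n-1}(x)\,\mathcal{P}_d(\mathcal{C}_n(x)),
\]
so that the right-hand side of the conjecture is exactly $\mathcal{P}_d(\mathcal{C}_n(x))$. Writing $x = u + u^{-1}$ turns the path polynomials into $\mathcal{P}_m(x) = (u^{m+1}-u^{-m-1})/(u-u^{-1})$ and the cycle polynomial into $\mathcal{C}_n(x) = u^n + u^{-n}$; both sides then collapse to the same Chebyshev quotient $(u^{(d+1)n}-u^{-(d+1)n})/(u^n - u^{-n})$. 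The identity can be repackaged combinatorially by cutting $P_{nd+n-1}$ at the multiples of $n$ and sorting matchings according to which cut-edges they use; I would keep this as a separate, self-contained lemma.

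For the main identity I would exploit the structure of $d$-covers of a cycle. A labeling $\lambda \in \mathcal{L}_d(C_n)$ assigns a permutation $\sigma_i \in S_d$ to each edge, and tracking the fibre as one walks once around the cycle sends a label $a$ to $\pi(a)$, where $\pi = \sigma_n\cdots\sigma_1$ is the holonomy. Hence the component through a vertex with label $a$ is a cycle of length $n$ times the $\pi$-cycle length of $a$, so $G_\lambda = \bigsqcup_{c} C_{n|c|}$ over the cycles $c$ of $\pi$. By multiplicativity of the matching polynomial over disjoint unions, $\mathcal{M}_{G_\lambda}(x) = \prod_c \mathcal{C}_{n|c|}(x)$. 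Exactly $(d!)^{n-1}$ labelings realize each fixed $\pi$, so the holonomy is uniform on $S_d$ and
\[
\mathcal{C}_{n,d}(x) = \frac{1}{d!}\sum_{\pi \in S_d}\ \prod_{c\text{ a cycle of }\pi}\mathcal{C}_{n|c|}(x).
\]

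Next I would invoke the commutativity identity $\mathcal{C}_{n\ell}(x) = \mathcal{C}_\ell(\mathcal{C}_n(x))$ to rewrite each factor $\mathcal{C}_{n|c|}(x)$ as $\mathcal{C}_{|c|}(y)$ with $y := \mathcal{C}_n(x)$, reducing the display to $\tfrac{1}{d!}\sum_\pi \prod_c \mathcal{C}_{|c|}(y)$. It then remains to prove the cycle-index identity $\mathcal{P}_d(y) = \tfrac{1}{d!}\sum_{\pi\in S_d}\prod_c \mathcal{C}_{|c|}(y)$. Writing $y = v + v^{-1}$ so that $\mathcal{C}_j(y) = v^j + v^{-j}$ is the power sum $p_j$ in $v,v^{-1}$, the exponential (cycle-index) formula gives $\sum_{d\ge0}\bigl(\tfrac1{d!}\sum_\pi\prod_c p_{|c|}\bigr)t^d = \exp\bigl(\sum_{j\ge1}p_j t^j/j\bigr) = (1-vt)^{-1}(1-v^{-1}t)^{-1} = \sum_{d\ge0}\mathcal{P}_d(y)\,t^d$, and matching coefficients finishes the reduction; combinatorially this is the statement $h_d(v,v^{-1}) = \mathcal{P}_d(y)$, provable by a sign-reversing involution expressing the complete homogeneous symmetric function as an alternating sum over partitions of $[d]$ into cycles.

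I expect the main obstacle to be the purely combinatorial proof of the commutativity $\mathcal{C}_k(\mathcal{C}_n(x)) = \mathcal{C}_{kn}(x)$ itself. Expanding the left side, $\mathcal{C}_k(\mathcal{C}_n(x))$ enumerates a matching $N$ of $C_k$ together with a matching of a copy of $C_n$ sitting at each of the $k-2|N|$ vertices left uncovered by $N$, weighted by $(-1)^{|N|+\sum|M'|}$; the target simply enumerates matchings of the single long cycle $C_{kn}$. Constructing an explicit, sign-respecting bijection between these two families — or equivalently a sign-reversing involution that cancels every configuration not matched — forces one to reconcile the cycle-versus-path behavior at the $k$ junctions between consecutive length-$n$ arcs, and making the signs and the wrap-around interactions cancel correctly is the delicate part that will require the most care.
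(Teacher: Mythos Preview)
Your plan is correct and follows the paper's architecture step for step: reduce Hall's conjecture to $\mathcal{C}_{n,d}=\mathcal{P}_d(\mathcal{C}_n)$ via the path identity, average over the holonomy permutation to get $\mathcal{C}_{n,d}=\tfrac{1}{d!}\sum_\pi\prod_c\mathcal{C}_{n|c|}$, apply $\mathcal{C}_{n\ell}=\mathcal{C}_\ell(\mathcal{C}_n)$, and finish with $\mathcal{P}_d(y)=\tfrac{1}{d!}\sum_\pi\prod_c\mathcal{C}_{|c|}(y)$. The difference is in execution, not strategy. For the path identity and the final cycle-index identity you use the Chebyshev substitution $x=u+u^{-1}$ and the exponential formula (the $h_d$-in-terms-of-$p_j$ relation), which is clean and makes the symmetric-function structure visible; the paper, in keeping with its ``combinatorial'' title, instead sketches a bijective argument for the path identity and gives an explicit bijection for the equivalent coefficient identity $d!\,a(P_d,m)=\sum_{\sigma\in S_d}a(G_\sigma,m)$ using two-line notation and a twist by the matching involution $\tau_M$. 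You are right that the combinatorial proof of $\mathcal{C}_k(\mathcal{C}_n)=\mathcal{C}_{kn}$ is where the real work lies --- the paper devotes most of its effort there, via a ``zag/hop'' decomposition of $C_{kn}$ and a sign-reversing involution on metacycle matchings --- though note that once you allow the substitution $x=u+u^{-1}$ elsewhere, this identity too is immediate from $\cos(kn\theta)=\cos(k\cdot n\theta)$, so it is only an obstacle if you insist on a bijective argument.
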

\noindent where $C_\ell$ denotes the cycle graph on $\ell$ vertices and $P_\ell$ denotes the path graph on $\ell$ vertices.

In order to prove this Conjecture \ref{conj:Hall}, we will take advantage of the structure of Chebyshev polynomials, two families of orthogonal polynomials defined recursively. Let $\mathcal{P}_n(z) = \mathcal{M}_{P_n} (z)$ and $\mathcal{C}_n(z) = \mathcal{M}_{C_n} (z)$. It is well known (see \cite{Godsil}) that there is a relationship between the classical matching polynomials of the path graph and cycle graph with the Chebyshev polynomials in the following way:
$$ \mathcal{P}_n(2x)=\mathcal{U}_n(x)$$
$$ \mathcal{C}_n(2x)=2\mathcal{T}_n(x)$$
where $\mathcal{T}_n(x)$ denotes the $n$th Chebyshev polynomial of the first kind and $\mathcal{U}_n(x)$ denotes the $n$th Chebyshev polynomial of the second kind. Through these relations and general properties of the Chebyshev polynomials, we will be able to manipulate the $d$-matching polynomials combinatorially.

In Section \ref{section:Definitions}, we will provide some preliminary definitions and notation. In Section \ref{section:Comp}, we provide some results which allow us to explicitly write out the $d$-matching polynomial for the cycle graph in terms of classical matching polynomials by using permutations rather than labelings.
In Section \ref{section:Combproofs}, we provide combinatorial proofs of some identities which are analogues of Chebyshev identities. The most important of these is the following:
\begin{theoremsp2}
 $\mathcal{C}_{kn}(x) = \mathcal{C}_k(\mathcal{C}_n(x))$.

\end{theoremsp2}

We also use these identities to rewrite Conjecture \ref{conj:Hall} into the following equivalent form:
\begin{theoremsp}
$$\mathcal{C}_{n,d}(x)=\mathcal{P}_d(\mathcal{C}_n(x))$$
\end{theoremsp}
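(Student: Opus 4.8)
The plan is to reduce the statement to an average over the symmetric group and then to a single clean counting identity. First I would invoke the permutation description of the $d$-matching polynomial of a cycle developed in Section \ref{section:Comp}: a labeling $\lambda \in \mathcal{L}_d(C_n)$ has a well-defined \emph{holonomy} $\pi \in S_d$, namely the product of the edge-permutations taken once around the cycle; the cover $G_\lambda$ is a disjoint union of cycles with one copy of $C_{n|c|}$ for each cycle $c$ of $\pi$; and every $\pi \in S_d$ is realized by exactly $(d!)^{n-1}$ of the $(d!)^{n}$ labelings. Consequently
\[
\mathcal{C}_{n,d}(x) \;=\; \frac{1}{d!}\sum_{\pi \in S_d}\ \prod_{c \text{ a cycle of }\pi} \mathcal{C}_{n|c|}(x),
\]
where $|c|$ denotes the length of the cycle $c$.

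Next I would apply Proposition \ref{prop:comm}. Since $\mathcal{C}_{n|c|}(x) = \mathcal{C}_{|c|}(\mathcal{C}_n(x))$, writing $y := \mathcal{C}_n(x)$ converts the displayed sum into $\tfrac{1}{d!}\sum_{\pi}\prod_c \mathcal{C}_{|c|}(y)$. Thus the theorem follows once I establish the polynomial identity
\[
\frac{1}{d!}\sum_{\pi \in S_d}\ \prod_{c \text{ a cycle of }\pi} \mathcal{C}_{|c|}(y) \;=\; \mathcal{P}_d(y),
\]
which no longer mentions $n$ and is all that remains.

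To prove this identity combinatorially I would compare coefficients. For each $\pi$ the product $\prod_c \mathcal{C}_{|c|}(y)$ is the matching polynomial $\mathcal{M}_{D_\pi}(y)$ of the $2$-regular (multi)graph $D_\pi = \bigsqcup_c C_{|c|}$ on the vertex set $[d]$, whose edges are the pairs $\{i,\pi(i)\}$ (fixed points contribute isolated vertices and $2$-cycles contribute digons). Extracting the coefficient of $y^{d-2k}$ and using $a(P_d,k) = \binom{d-k}{k}$, the identity reduces to the enumerative statement
\[
\sum_{\pi \in S_d} a(D_\pi, k) \;=\; d!\binom{d-k}{k}.
\]
The key observation is that the edges of $D_\pi$ are naturally indexed by their \emph{tails}: the edge $\{i,\pi(i)\}$ is recorded by $i$, which makes the two parallel edges of a digon distinct and so reproduces the matching polynomial of $C_2$. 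A size-$k$ matching of $D_\pi$ is then the same datum as a $k$-set $T \subseteq [d]$ of tails for which the edges $\{i,\pi(i)\}$, $i \in T$, are pairwise disjoint, and this disjointness is equivalent to $T \cap \pi(T) = \varnothing$. Hence the left side counts pairs $(\pi, T)$ with $T \in \binom{[d]}{k}$ and $\pi(T)\cap T = \varnothing$, which I would enumerate by choosing $T$ in $\binom{d}{k}$ ways, an injection $T \hookrightarrow [d]\setminus T$ in $(d-k)!/(d-2k)!$ ways, and the remaining values of $\pi$ in $(d-k)!$ ways; the product simplifies to $d!\binom{d-k}{k}$, as required.

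I expect the main obstacle to be the bookkeeping in this final identity rather than any single deep step: one must set up the tail-indexing of edges so that it simultaneously handles ordinary cycles, the digons from $2$-cycles, and the isolated vertices from fixed points, and then verify that the matching-to-tail-set correspondence is a genuine bijection carrying disjointness to $T\cap\pi(T)=\varnothing$. A useful sanity check, and an alternative to the bijection, is the generating-function route: the exponential formula gives $\sum_{d\ge 0}\tfrac{z^d}{d!}\sum_{\pi\in S_d}\prod_c \mathcal{C}_{|c|}(y) = \exp\!\big(\sum_{\ell\ge1}\mathcal{C}_\ell(y)\,z^\ell/\ell\big)$, and evaluating the inner sum through $\mathcal{C}_\ell(2\cos\theta)=2\cos(\ell\theta)$ collapses it to $(1-yz+z^2)^{-1}$, which is exactly the ordinary generating function $\sum_{d\ge0}\mathcal{P}_d(y)\,z^d$; matching coefficients of $z^d$ recovers the identity.
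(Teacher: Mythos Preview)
Your reduction is exactly the paper's: invoke Proposition~\ref{Prop:WARMUP} to rewrite $\mathcal{C}_{n,d}(x)$ as $\tfrac{1}{d!}\sum_{\pi}\prod_c \mathcal{C}_{n|c|}(x)$, apply Proposition~\ref{prop:comm} to replace each $\mathcal{C}_{n|c|}(x)$ by $\mathcal{C}_{|c|}(\mathcal{C}_n(x))$, set $y=\mathcal{C}_n(x)$, and compare coefficients to arrive at the purely combinatorial core
\[
\sum_{\pi\in S_d} a(D_\pi,k)\;=\;d!\,a(P_d,k).
\]
Your $D_\pi$ is precisely the paper's $G_\sigma$, so up to this point the two arguments coincide.

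The difference is in how this last identity is established. The paper (Proposition~\ref{BijGP}) builds an explicit bijection: it starts from a permutation written in two-line notation together with a matching of the underlying path, isolates the ``left ends'', relabels, and composes with the matching involution $\tau_M$ to produce a permutation $\sigma$ together with a matching of $G_\sigma$. You instead observe that tail-indexing the edges of $D_\pi$ identifies size-$k$ matchings with $k$-subsets $T\subseteq[d]$ satisfying $T\cap\pi(T)=\varnothing$, and then count the pairs $(\pi,T)$ directly by choosing $T$, an injection $T\hookrightarrow[d]\setminus T$, and the rest of $\pi$. This is correct and handles fixed points (forbidden in $T$) and digons (two tails, at most one in $T$) uniformly, as you note. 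Your generating-function alternative via the exponential formula and the Chebyshev generating series $\sum_{d\ge0}\mathcal{P}_d(y)z^d=(1-yz+z^2)^{-1}$ is also valid and does not appear in the paper.

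What each buys: your double-counting is shorter and requires no auxiliary constructions, and the generating-function route makes the identity essentially a one-liner once the cycle-index machinery is on the table. The paper's argument, by contrast, furnishes an explicit bijection (indeed, it factors it through matchings of $K_d$ in Propositions~\ref{BijKP} and~\ref{BijGK}), which is more in keeping with the paper's goal of giving combinatorial, bijective proofs of Chebyshev-type identities rather than appealing to generating functions or direct enumeration.
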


\noindent In Section \ref{section:Main}, we prove this equivalent theorem, thus proving the conjecture.

\section{Definitions}\label{section:Definitions}
\subsection{Symmetric graphs}
In this subsection we define the basic objects of our study.  At many stages in this development, proofs have required in an essential way a notion of orientation on a graph.  While the actual choice of an orientation is immaterial, its existence plays a crucial role.  See Subsection \ref{SYMM-UND}  for more discussion of this philosophy. 

\begin{definition}
A \textbf{(symmetric) graph} $G$ is a tuple $(V(G),E(G),t,h,\tau)$, where we have:
\begin{enumerate}[(1)]
\item a finite set $V(G)$ of \textbf{vertices},\vspace{0.5em}
\item a finite set $E(G)$ of \textbf{edges},\vspace{0.5em}
\item a pair of maps $(t,h) : E(G) \rightarrow V(G)^2$ called the \textbf{tail and head maps}, and \vspace{0.5em}
\item an element $\tau \in \mathrm{Sym}(E(G))$ (the symmetric group on the set $E(G)$) such that for each $e \in E(G)$:
\begin{align*}
	t (\tau (e)) = h(e) &, \  h(\tau (e)) = t(e), \ and \\
	\tau (e) &\ne e = \tau^2 (e).
\end{align*}
\end{enumerate}
\noindent Each set $\{e , \tau (e) \}$ will be referred to as a \textbf{geometric edge} (cf. Serre \cite{serre1980trees}).
\end{definition}
\vspace{0.5em}
\begin{example}
~\begin{enumerate}[(1)]
\item Fix some $n >0$.  The path graph on $n$ vertices, denoted $P_n$ is given by the following data:
\begin{align*}
	V(P_n) &= \{0 , 1, \ldots , n-1\}, \ E(P_n) = \{0^+ , 1^+ , \ldots , (n-2)^+ \} \coprod \{0^- , 1^- , \ldots , (n-2)^-\} \\[0.5em]
	(t,h)(j^+) &= (j,j+1), \ (t,h)(j^-) = (j+1 , j): \ \text{for $j=0 , \ldots, n-2$}\\[0.5em]
	\tau (j^\pm) &= j^\mp
\end{align*}
For the case $n=0$ we take the convention that $P_0$, the path graph on $0$ vertices, is the \textbf{empty graph} which has no vertices and no edges.
\item Fix $n >0$.  The cycle graph on $n$ vertices, denoted $C_n$, is given by the following data:
\begin{align*}
	V(C_n) &= \mathbb{Z} / n \mathbb{Z} = \{0 , 1, \ldots , n-1\}, \ E(C_n) = \{0^+ , 1^+ , \ldots , (n-1)^+ \} \coprod \{0^- , 1^- , \ldots , (n-1)^-\} \\[0.5em]
	(t,h)(j^+) &= (j,j+1),  \ (t,h)(j^-) = (j+1 , j): \ \text{for $j=0 , \ldots, n-1$}\\[0.5em]
	\tau (j^\pm) &= j^\mp
\end{align*}
We similarly assume by convention that $C_0$ is the empty graph.
\item Fix $n >0$.  If we consider the aforementioned graphs with edge sets given by the respective sets of geometric edges we obtain undirected versions of $C_n$ and $P_n$.  We'll denote these also by $C_n$ and $P_n$ since our philosophy will be to treat symmetric graphs and undirected graphs as essentially the same objects.  For example, 
\begin{align*}
    V(P_n) = \{0 , 1, \ldots , n-1\} &, \ E(P_n) = \{e_0, e_1 , \ldots , e_{n-2}\}; \\[0.5em]
    V(C_n) = \{0,1, \ldots , n-1\} &, E(C_n) = \{e_0 , e_1 , \ldots , e_{n-1}\}.
\end{align*}
Refer to Figure \ref{FIG1}.  Just as with their symmetric counterparts, (undirected) $P_0$ and $C_0$ are both the empty graph.
\end{enumerate}

\begin{figure*}
        \centering
        \begin{subfigure}[b]{0.475\textwidth}
            \centering
\begin{tikzpicture}
[scale=.8,auto=left,every node/.style={circle,draw,minimum size=4.5pt,inner sep=2.5pt,fill=blue!20}]
\node[label={$0$}] (n1) at (0,0) {} ;
\node[draw=none,fill=none,rectangle] (l1) at (0.75, 1) {$0^-$};
\node[label={$1$}] (n2) at (1.5,0) {};
\node[draw=none,fill=none,rectangle] (l1) at (2.25, 1) {$1^-$};
\node[label={$2$}] (n3) at (3,0) {};
\node[label={[label distance=-0.26cm]90:{$n-2$}}] (n4) at (4.5,0) {} ;
\node[draw=none,fill=none,rectangle] (l1) at (5.5, 1) {$(n-2)^-$};
\node[label={0:{$n-1$}}] (n5) at (6,0) {};
\node[draw=none,fill=none,rectangle] (l1) at (0.75, -0.75) {$0^+$};
\node[draw=none,fill=none,rectangle] (l1) at (2.25, -0.75) {$1^+$};
\node[draw=none,fill=none,rectangle] (l1) at (5.5, -0.75) {$(n-2)^+$};

    \begin{scope}[thick,decoration={
    markings,
    mark=at position 0.5 with {\arrow{>}}}
    ]
    \foreach \from/\to in {n1/n2,n2/n3,n4/n5}
    \draw[postaction=decorate] (\to) [bend right=20] to (\from);
    \end{scope}
      \begin{scope}[thick,decoration={
    markings,
    mark=at position 0.5 with {\arrow{>}}}
    ]
    \foreach \from/\to in {n1/n2,n2/n3,n4/n5}
    \draw[postaction=decorate] (\from)[bend right=20] to (\to);
    \end{scope}
\draw[dotted] (n3) to (n4);
\end{tikzpicture}
  \caption{(symmetric) $P_n $}
        \end{subfigure}
        \hfill
        \begin{subfigure}[b]{0.475\textwidth}  
            \centering
 \begin{tikzpicture}
[scale=.8,auto=left,every node/.style={circle,draw,minimum size=4.5pt,inner sep=2.5pt,fill=blue!20}]
\node[label={-45:{$0$}}] (n1) at (-45:1.75) {} ;
\node[draw=none,fill=none,rectangle] (e1) at (0:2.25) {$0^+$};
\node[label={45:{$1$}}] (n2) at (45:1.75) {};
\node[draw=none,fill=none,rectangle] (e1) at (90:2.25) {$1^+$};
\node[label={135:{$2$}}] (n3) at (135:1.75) {};
\node[label={[label distance=-0.26cm]225:{$n-1$}}] (n4) at (225:1.75) {} ;
\node[draw=none,fill=none,rectangle] (e1) at (270:2.25) {$(n-1)^+$};
\node[draw=none,fill=none,rectangle] (e1) at (0:0.6) {$0^-$};
\
\node[draw=none,fill=none,rectangle] (e1) at (90:0.6) {$1^-$};

\node[draw=none,fill=none,rectangle] (e1) at (270:0.6) {$(n-1)^-$};

    \begin{scope}[thick,decoration={
    markings,
    mark=at position 0.5 with {\arrow{>}}}
    ]
    \foreach \from/\to in {n1/n2,n2/n3,n4/n1}
    \draw[postaction=decorate] (\from) to[bend right=15] (\to);
    \end{scope}
    \begin{scope}[thick,decoration={
    markings,
    mark=at position 0.5 with {\arrow{>}}}
    ]
    \foreach \from/\to in {n1/n2,n2/n3,n4/n1}
    \draw[postaction=decorate] (\to) to[bend right=15] (\from);
    \end{scope}
    \draw[dotted] (n3) to[bend right=15] (n4);
    \draw[dotted] (n3) to[bend left=15] (n4);
\end{tikzpicture}
  \caption{(symmetric) $C_n$}
\end{subfigure}
        \vskip\baselineskip
        \begin{subfigure}[b]{0.475\textwidth}   
            \centering 
            \begin{tikzpicture}
[scale=.8,auto=left,every node/.style={circle,draw,minimum size=4.5pt,inner sep=2.5pt,fill=blue!20}]
\node[label={$0$}] (n1) at (0,0) {} ;
\node[label={$1$}] (n2) at (1.5,0) {};
\node[label={$2$}] (n3) at (3,0) {};
\node[label={[label distance=-0.26cm]90:{$n-2$}}] (n4) at (4.5,0) {} ;
\node[label={0:{$n-1$}}] (n5) at (6,0) {};
\node[draw=none,fill=none,rectangle] (l1) at (0.75, -0.75) {$e_0$};
\node[draw=none,fill=none,rectangle] (l1) at (2.25, -0.75) {$e_1$};
\node[draw=none,fill=none,rectangle] (l1) at (5.5, -0.75) {$e_{n-2}$};

    \begin{scope}[thick,decoration={
    markings,
    mark=at position 0.5 with {\arrow{>}}}
    ]
    \foreach \from/\to in {n1/n2,n2/n3,n4/n5}
    \draw (\to) to (\from);
    \end{scope}
\draw[dotted] (n3) to (n4);
\end{tikzpicture}
            \caption{(undirected) $P_n$}
        \end{subfigure}
        \quad
        \begin{subfigure}[b]{0.475\textwidth}   
            \centering 
            \begin{tikzpicture}
[scale=.8,auto=left,every node/.style={circle,draw,minimum size=4.5pt,inner sep=2.5pt,fill=blue!20}]
\node[label={-45:{$0$}}] (n1) at (-45:1.75) {} ;
\node[draw=none,fill=none,rectangle] (e1) at (0:2.25) {$e_0$};
\node[label={45:{$1$}}] (n2) at (45:1.75) {};
\node[draw=none,fill=none,rectangle] (e1) at (90:2.25) {$e_1$};
\node[label={135:{$2$}}] (n3) at (135:1.75) {};
\node[label={[label distance=-0.26cm]225:{$n-1$}}] (n4) at (225:1.75) {} ;
\node[draw=none,fill=none,rectangle] (e1) at (270:2.25) {$e_{n-1}$};
\

    \begin{scope}[thick,decoration={
    markings,
    mark=at position 0.5 with {\arrow{>}}}
    ]
    \foreach \from/\to in {n1/n2,n2/n3,n4/n1}
    \draw (\from) to[bend right=15] (\to);
    \end{scope}
    \draw[dotted] (n3) to[bend right=15] (n4);
\end{tikzpicture}
            \caption{(undirected) $C_n$}
        \end{subfigure}
        \caption{paths and cycles}
    \end{figure*}
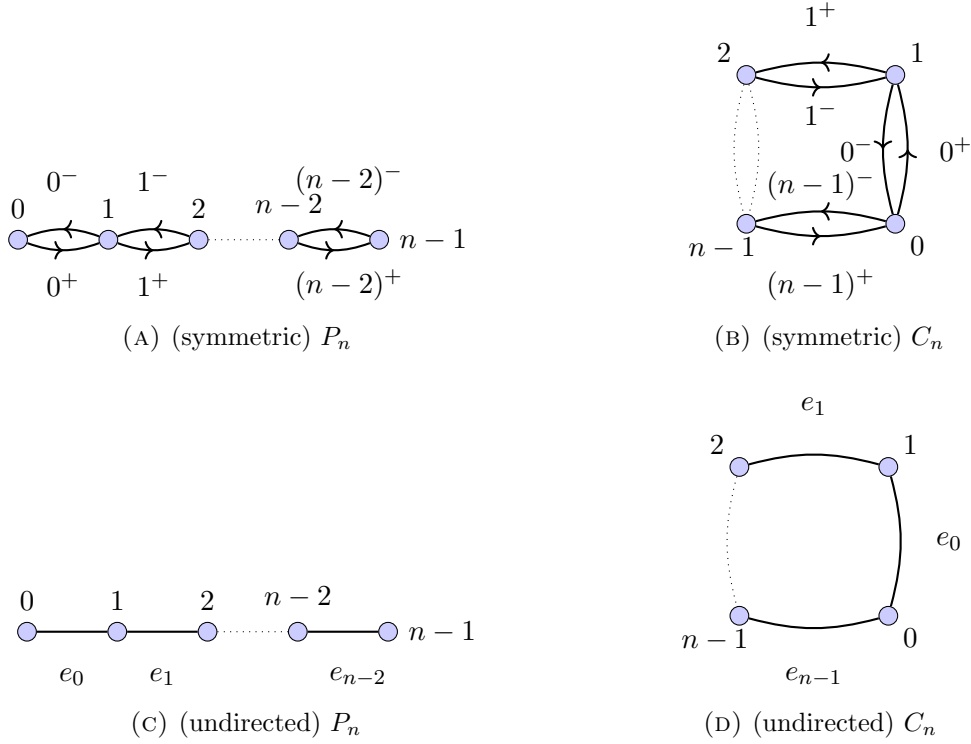\label{FIG1}
\end{example}

\begin{definition}
Let $G_j = (V(G_j) , E(G_j) , E(G_j) \xrightarrow{(t,h)} V(G_j)^2 , \tau_j))$ be graphs for $j= 1,2$.  Then a \textbf{morphism $f: G_1 \to G_2$} consists of a pair of maps
\begin{align*}
	f_E: E(G_1) &\to E(G_2) \\ \vspace{0.5em}
	f_V : V(G_1) &\to V(G_2)
\end{align*}
such that each of the following diagrams commutes:
$$
\begin{tikzcd}
E(G_1) \ar{r}{f_E} \ar{d}[swap]{(t,h)} & E(G_2) \ar{d}{(t,h)} & & E(G_1) \ar{r}{f_E} \ar{d}[swap]{\tau_1} & E(G_2) \ar{d}{\tau_2} \\
V(G_1)^2 \ar{r}[swap]{f_V ^2} & V(G_2)^2 & & E(G_1) \ar{r}[swap]{f_E} & E(G_2)
\end{tikzcd}
$$
An \textbf{isomorphism} from $G_1$ to $G_2$ is a morphism $f$ such that each of the set maps $f_V$ and $f_E$ are bijections.
\end{definition}
The obvious example is the \textbf{identity morphism}: $\Id_{G}$, which has vertex map (respectively edge map) $\mathrm{Id_{V(G)}}$ (respectively $\mathrm{Id_{E(G)}}$ ) which is the identity set map on $V(G)$ (respectively E(G)).  We remark that we could equivalently have defined an isomorphism as follows: an isomorphism from $G_1$ to $G_2$ is a morphism $f = (f_V , f_E) : G_1 \to G_2$ such that there exists a morphism $f^{-1} = (f_V^{-1}, f_E ^{-1}) : G_2 \to G_1$ such that we have the following:
\begin{align*}
    f_V ^{-1} \circ f_V = \mathrm{Id_{V(G_1)}}&, \ f_V \circ f_V ^{-1} = \mathrm{Id_{V(G_2)}} \\[0.5em]
    f_E ^{-1} \circ f_E = \mathrm{Id_{E(G_1)}}&, \ f_E \circ f_E ^{-1} = \mathrm{Id_{E(G_2)}}
\end{align*}
In this case we'll simply write $f^{-1} \circ f = \mathrm{Id_{G_1}}$ and $f \circ f^{-1} = \mathrm{Id_{G_2}}$.
\subsection{Covers}
For this subsection fix a (symmetric) graph $G=(V , E, t, h, \tau)$.  
\begin{definition}
For any $v \in V$, the \textbf{neighborhood of $v$}, denoted $N_v$, consists of one third of each edge in $t^{-1}(v) \cup h^{-1} (v)$.  
\end{definition}
\noindent Frequently we will write $e \in N_v$ to mean that $e \in t^{-1} (v) \cup h^{-1} (v)$ for a given edge $e \in E$.  
\begin{definition}
$G$ is \textbf{connected} if for any pair of vertices $x$ and $y$ there exists a number $n \ge 0$ and a morphism $f : P_n \to G$ such that $f(0) = x$ and $f(n-1) = y$.
\end{definition}
\noindent In other words, $G$ is connected if there is a path between any two of its vertices.
\begin{definition}\label{COV-DEF}
A \textbf{cover} of $G$ is a pair $(H,\phi)$ such that the following hold:
\begin{enumerate}[(1)]
\item $H$ is a (symmetric) graph; \vspace{0.5em}
\item $\phi : H \to G$ is a surjective morphism (i.e., each of $\phi_V$ and $\phi_E$ is a surjective map of sets); \vspace{0.5em}
\item for each $\hat{w} \in V(H)$, $\phi|_{N_{\hat{w}}} : N_{\hat{w}} \to N_{\phi (\hat{w})}$ is a bijection.
\end{enumerate}
\end{definition}
If $G$ is connected, it follows from $(3)$ in Definition \ref{COV-DEF} that $|\phi^{-1}(v)| = |\phi^{-1}(w)|$ for every $v,w \in V(G)$.  In this case, if $|\phi^{-1}(v)| = d$, then we say that \textbf{$H$ is a degree $d$ cover of $G$} or sometimes just that \textbf{$H$ is a $d$-cover}.  
\vspace{0.5em}
\begin{example}\label{EX1}
~\begin{enumerate}[(1)]
\item Assume $G$ is connected.  Then we have the \textbf{trivial cover}: $H = G$, $\phi = \Id_G$.  In this case $H$ is a $1$-cover of $G$ and $\phi$ is an isomorphism.
\item Let $n,\ell$ be positive integers and consider the morphism $\pi_{\ell} : C_{\ell \cdot n} \to C_n$ defined by 
\begin{align*}
	\pi_{\ell} (j) &= j (\mathrm{mod}\  n) \\[0.5em]
	\pi_{\ell} (j^{\pm}) &= j^{\pm} (\mathrm{mod}\  n)
\end{align*}
for each $j = 0 , \ldots , \ell n - 1$.  Then $(C_{\ell \cdot n} , \pi_{\ell})$ is a degree $\ell$ cover of $C_n$.
\item Let $G = C_n$ and consider a tuple $(\mu_1 , \ldots , \mu_k)$ of non-negative integers.  Let
$$
    H:= \coprod_{i=1} ^k C_{n \cdot \mu_i}
$$
and let $\phi$ be the obvious morphism whose restriction to each factor in the disjoint union is the map $\pi_{\mu_i} : C_{n \cdot \mu_i} \to C_n$ discussed above.  If we set $d := \sum_{i=1} ^k \mu_i$, then $(H,\phi)$ is a $d$-cover of $G$.
\end{enumerate}
\end{example}

\subsection{$S_d$-Labelings}\label{subsection:sdlambda}
Fix $d >0$ and let $[d]:= \{1 , \ldots , d\}$.  We let $S_d$ denote the group of permutations on the set $[d]$.  We'll prefer to write the action of $S_d$ on $[d]$ on the right so that $i ^ \sigma$ denotes the image of $i$ under the permutation $\sigma \in S_d$.  
\begin{definition}
An \textbf{$S_d$-labeling on $G$} is a set map $\sigma : E(G) \to S_d$ such that 
$$
	\sigma (\tau (e)) = \sigma (e)^{-1}
$$
for every $e \in E(G)$.  Let $\mathcal{L}_d (G)$ denote the set of all $S_d$-labelings on $G$.  
\end{definition}
Given an $S_d$-labeling $\sigma$, we now describe how to construct a $d$-cover $(G_\sigma , \phi)$ of $G$.  Let
\begin{align*}
V(G_\sigma):= V(G) \times [d], \ \phi (v,j) = v&; \ \ \  E(G_\sigma):= E(G) \times [d], \ \phi (e,j) = e \\[0.5em] 
t(e,j) = (t(e) , j)&, \     h(e,j) = (h(e) , j^{\sigma(e)})\\[0.5em]
\tau (e,j) &= (\tau (e) , j^{\sigma(e)})
\end{align*}
for all $j = 1, \ldots , d$, for all $e \in E(G)$, and for all $v \in V(G)$.
\\ Some care is needed in examining the above definitions: for example when we define $t(e,i) = (t(e) , i)$, on the left hand side we refer to the tail map on $G_\sigma$ and on the right to the tail map on $G$.

\section{A Computational Improvement}\label{section:Comp}
To simplify notation, we let $C_{n,\lambda}$ denote the $d$-cover of $C_n$ determined by $\lambda \in \mathcal{L}_d (C_n)$ as described above (i.e., $G=C_n$ and $\sigma = \lambda$). 
\\ Many different $S_d$-labelings $\lambda$ on $C_n$ yield isomorphic covers $C_{n,\lambda}$.  In this section we show how to group such labelings together to increase the efficiency with which one computes $\mathcal{M}_{C_n , d} (x)$.

\subsection{Conjugacy and isomorphism}
\begin{lemma}\label{LEM1}
Fix $\lambda , \mu \in \mathcal{L}_d (C_n)$, fix some $i_0 \in V(C_n) = \mathbb{Z} / n \mathbb{Z}$ and let $\ell_0:= i_0 + 1$.  Suppose that the following hold:
\begin{enumerate}[(i)]
\item $\lambda( i ^+) = \mu (i^+)$ for all $i \ne i_0 , \ell_0$; \vspace{0.5em}
\item $\lambda(i_0^+)\lambda (\ell_0^+)=\mu(i_0^+)\mu (\ell_0^+)$. \vspace{0.5em}
\end{enumerate}
Then there is an isomorphism $f : C_{n,\lambda} \cong C_{n , \mu}$.  
\end{lemma}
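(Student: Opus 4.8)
The plan is to exhibit an explicit isomorphism $f = (f_V, f_E)$ that relabels only the fiber over the single vertex $\ell_0$ and acts as the identity on every other fiber; this is the covering-space analogue of a gauge transformation. First I would introduce the permutation $\rho := \lambda(i_0^+)^{-1}\mu(i_0^+) \in S_d$ and observe that hypothesis (ii) says precisely that $\rho$ also equals $\lambda(\ell_0^+)\mu(\ell_0^+)^{-1}$: clearing denominators in $\lambda(i_0^+)^{-1}\mu(i_0^+) = \lambda(\ell_0^+)\mu(\ell_0^+)^{-1}$ recovers the identity $\lambda(i_0^+)\lambda(\ell_0^+) = \mu(i_0^+)\mu(\ell_0^+)$. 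For each vertex $v$ I set $\rho_v := \rho$ if $v = \ell_0$ and $\rho_v := \mathrm{id}_{[d]}$ otherwise, and then define
$$ f_V(v, j) = (v, j^{\rho_v}), \qquad f_E(e, j) = (e, j^{\rho_{t(e)}}), $$
so that an oriented edge is relabeled according to the relabeling at its \emph{tail}. Since each $\rho_v$ is a bijection of $[d]$, both $f_V$ and $f_E$ are bijections; by the characterization of isomorphism in the excerpt, it then suffices to check that $f$ is a morphism.

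Next I would reduce the two commuting squares in the definition of morphism to a single algebraic identity. Unwinding the definitions of $t,h,\tau$ on $C_{n,\lambda}$ and $C_{n,\mu}$, the tail component of the $(t,h)$-square commutes automatically, while both the head component and the $\tau$-square collapse to the same \emph{gauge relation}
$$ \rho_{t(e)}\,\mu(e) = \lambda(e)\,\rho_{h(e)} \qquad \text{for every } e \in E(C_n). $$
Because $\mu(j^-) = \mu(j^+)^{-1}$ and $\lambda(j^-) = \lambda(j^+)^{-1}$, the relation for a negative edge $j^-$ is simply the inverse of the relation for $j^+$, so it is enough to verify the gauge relation on the positive edges $j^+$, where $t(j^+) = j$ and $h(j^+) = j+1$.

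Finally I would verify $\rho_j\,\mu(j^+) = \lambda(j^+)\,\rho_{j+1}$ case by case. For $n \ge 2$ the indices $i_0$ and $\ell_0 = i_0 + 1$ are distinct, so $\rho_j$ and $\rho_{j+1}$ are both trivial unless $j \in \{i_0, \ell_0\}$. When $j \notin \{i_0, \ell_0\}$ the relation collapses to $\mu(j^+) = \lambda(j^+)$, which is hypothesis (i); when $j = i_0$ it becomes $\mu(i_0^+) = \lambda(i_0^+)\rho$, which holds by the definition of $\rho$; and when $j = \ell_0$ it becomes $\rho\,\mu(\ell_0^+) = \lambda(\ell_0^+)$, which is exactly the second expression for $\rho$ extracted from (ii). This closes the verification.

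The genuinely creative step is the first one: guessing that a single relabeling $\rho$ localized at $\ell_0$ suffices, and pinning down $\rho$ so that the two edges flanking $\ell_0$ are corrected simultaneously — everything afterward is bookkeeping. The main points to watch are the interaction with $\tau$ and the negative edges (one must read the relabeling off at the tail of each oriented edge, not the head, for the $\tau$-square to close) and the index collisions: for $n \ge 2$ one has $i_0 \ne \ell_0$ and the three cases above are genuinely disjoint, whereas $n = 1$ forces $i_0 = \ell_0$ and would need separate treatment (or exclusion, since $C_1$ carries a self-loop).
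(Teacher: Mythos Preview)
Your proposal is correct and is essentially the paper's own proof: your $\rho$ is the paper's $\delta = \lambda(i_0^+)^{-1}\mu(i_0^+)$, and your uniform rule $f_E(e,j) = (e,j^{\rho_{t(e)}})$ unpacks on $j^+$ and $j^-$ to exactly the piecewise formulas the paper writes down. You in fact go further than the paper, which omits the verification as ``long and uninsightful,'' by reducing the head and $\tau$ squares to the single gauge relation $\rho_{t(e)}\mu(e)=\lambda(e)\rho_{h(e)}$ and checking it edge by edge; your caveat about $n=1$ is also well taken.
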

The proof is routine, except for the definition of the isomorphism. Let 
$$
	f_V (i,j) = \left\{ \begin{array}{ll} (i,j) & i \ne \ell_0; \\[0.5em] 
	(i, j^\delta) & i = \ell_0; \end{array} \right. \ \ 
	f_E (i^{+},j) = \left\{ \begin{array}{ll} (i^{+},j) & i \ne \ell_0 ;\\[0.5em] 
	(i^{+}, j^\delta) & i = \ell_0; \end{array} \right.
$$
where $\delta := \lambda (i_0 ^+)^{-1} \mu (i_0 ^+)$.  We define $f_E (i^- , j)$ so as to guarantee that $\tau_{\mu} \circ f_E = f_E \circ \tau_{\lambda}$ and find that imposing this condition gives
$$
	f_E (i^{-},j) = \left\{ \begin{array}{ll} (i^{-},j); & i \ne i_0 \\[0.5em] 
	(i^{-}, j^\delta); & i = i_0 \end{array} \right.
$$
The remainder of the proof, which we omit, is a long and uninsightful computation to check that $f$ as defined above gives an isomorphism.

\begin{cor}
If $\prod_{i=0} ^{n-1} \lambda (i^+) = \prod_{i=0} ^{n-1} \mu (i^+)$, then the associated covers $C_{n,\lambda}$ and $C_{n,\mu}$ are isomorphic.
\end{cor}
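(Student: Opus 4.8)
The plan is to deduce the corollary from Lemma \ref{LEM1} by a telescoping normalization argument. First I would observe that, since $\tau(i^+) = i^-$ and any $S_d$-labeling satisfies $\lambda(i^-) = \lambda(i^+)^{-1}$, a labeling on $C_n$ is completely determined by its values on the positive edges $0^+, \ldots, (n-1)^+$. Thus both the hypothesis $\prod_{i=0}^{n-1}\lambda(i^+) = \prod_{i=0}^{n-1}\mu(i^+)$ and the conclusion concern only this positive data, and it suffices to track how the positive labels change.

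Next I would read Lemma \ref{LEM1} as an elementary \emph{move}: given a labeling and an index $i_0$ with $\ell_0 = i_0 + 1$, I may replace the pair $(\lambda(i_0^+), \lambda(\ell_0^+))$ by any other pair having the same ordered product $\lambda(i_0^+)\lambda(\ell_0^+)$, leaving all other positive labels fixed, and the resulting cover is isomorphic to the original. In particular, taking the new labels to be $(\mathrm{Id}, \lambda(i_0^+)\lambda(\ell_0^+))$ pushes all of the ``mass'' from position $i_0$ onto position $\ell_0$ without changing the total product and without changing the isomorphism type of the cover.

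Applying this move successively at $i_0 = 0, 1, \ldots, n-2$ (so $\ell_0$ runs through $1, \ldots, n-1$, with no wraparound) transforms $\lambda$ into a \emph{standard labeling} $\lambda^{\mathrm{std}}$: an easy induction shows that immediately after the move at $i_0 = k$ the positive labels at positions $0, \ldots, k$ are all $\mathrm{Id}$ while position $k+1$ carries $\prod_{i=0}^{k+1}\lambda(i^+)$. At the end we obtain $\lambda^{\mathrm{std}}(i^+) = \mathrm{Id}$ for $i < n-1$ and $\lambda^{\mathrm{std}}((n-1)^+) = \prod_{i=0}^{n-1}\lambda(i^+)$, so that $\lambda^{\mathrm{std}}$ depends only on the total product $P := \prod_{i=0}^{n-1}\lambda(i^+)$. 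Since each move yields an isomorphic cover and isomorphisms compose, we get $C_{n,\lambda} \cong C_{n,\lambda^{\mathrm{std}}}$; running the identical procedure on $\mu$ gives $C_{n,\mu} \cong C_{n,\mu^{\mathrm{std}}}$. Because $\lambda$ and $\mu$ share the same product $P$, we have $\lambda^{\mathrm{std}} = \mu^{\mathrm{std}}$, and hence $C_{n,\lambda} \cong C_{n,\lambda^{\mathrm{std}}} = C_{n,\mu^{\mathrm{std}}} \cong C_{n,\mu}$, as desired.

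I do not expect a serious obstacle here, since Lemma \ref{LEM1} supplies the only nontrivial ingredient (the explicit isomorphism between covers differing by a single move). The remaining work is bookkeeping: checking at each step that condition (ii) of the lemma holds, which is automatic because the move preserves the ordered product, and that condition (i) holds because we only ever alter two adjacent positive labels. One should also dispatch the degenerate case (for $n=1$ the normalization loop is empty and the claim is immediate) and recall that the composite of the isomorphisms produced by the individual moves is again an isomorphism of covers, which follows directly from the definition of isomorphism given above.
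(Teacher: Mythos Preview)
Your proposal is correct and follows essentially the same approach as the paper: both normalize $\lambda$ and $\mu$ by repeatedly applying Lemma~\ref{LEM1} at $i_0=0,1,\ldots,n-2$ to push all labels onto position $n-1$, obtaining standard labelings that depend only on the total product. The only (cosmetic) difference is that you conclude directly from $\lambda^{\mathrm{std}}=\mu^{\mathrm{std}}$, whereas the paper invokes Lemma~\ref{LEM1} one additional time to link $\lambda_{n-1}$ and $\mu_{n-1}$---a step which, as your argument makes clear, is actually unnecessary since these labelings are literally equal.
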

\begin{proof}
We define an equivalence relation $\sim$ on $\mathcal{L}_d (C_n)$ by $\gamma_1 \sim \gamma_2$ if and only if
$$
	\prod_{i=0} ^{n-1} \gamma_1 (i^+)  = \prod_{i=0} ^{n-1} \gamma_2 (i^+) 
$$
Then we define a sequence of labelings $\lambda = \lambda_0 , \lambda_1 , \ldots , \lambda_{n-1}$ by 
$$
	\lambda_k (i^+) = \left\{ \begin{array}{ll} 1_{S_d}, & i=0,\ldots , k-1; \\[0.5em]
	\prod_{i=0} ^k \lambda(i^+), & i=k; \\[0.5em]
	\lambda (i^+), & i= k+1 , \ldots , n-1.
	\end{array}
	\right.
$$
One easily checks that $\lambda_k \sim \lambda_{k+1}$ for each $k = 0, \ldots , n-2$.  We claim that this induces a sequence of isomorphisms on the associated covers $C_{n,\lambda_k} \cong C_{n , \lambda_{k+1}}$ and thus $C_{n , \lambda_0} \cong C_{n,\lambda_{n-1}}$ by transitivity.  Indeed, we apply Lemma \ref{LEM1} with $i_0 = k$: because 
\begin{align*}
    \lambda_k (i^+) &= \lambda_{k+1} (i^+) = \left\{ \begin{array}{ll} 1_{S_d} & i <i_0; \\[0.5em] 
    \lambda(i^+) & i >\ell_0; \end{array} \right. \\[0.5em]
    \lambda_k (i_0^+) \lambda_k (\ell_0^+) &= \prod_{i=0}^{\ell_0} \lambda(i^+) = \lambda_{k+1} (i_0 ^+) \lambda_{k+1} (\ell_0 ^+)
\end{align*}
we see that the hypotheses of Lemma \ref{LEM1} are met and thus obtain an isomorphism $C_{n,\lambda_k} \cong C_{n,\lambda_{k+1}}$.  Of course we can do the same thing for $\mu$: define a sequence $\mu = \mu_0 \sim \mu_1 \sim \ldots \sim \mu_{n-1}$ by 
$$
	\mu_k (i^+) = \left\{ \begin{array}{ll} 1_{S_d}, & i=0,\ldots , k-1; \\[0.5em]
	\prod_{i=0} ^ k \mu(i^+), & i=k; \\[0.5em]
	\mu (i^+), & i= k+1 , \ldots , n-1.
	\end{array}
	\right.
$$
and we thus obtain two sequences of isomorphisms:
\begin{align*}
	C_{n,\lambda} \cong C_{n,\lambda_1} &\cong \ldots \cong C_{n,\lambda_{n-1}} \\[0.5em]
	C_{n,\mu} \cong C_{n,\mu_1} &\cong \ldots \cong C_{n,\mu_{n-1}}
\end{align*}
Our final task is to prove that the relation $\lambda_{n-1} \sim \mu_{n-1}$, which holds by hypothesis, also induces an isomorphism on corresponding covers.  This is achieved by applying Lemma \ref{LEM1} to $\lambda_{n-1}$ and $\mu_{n-1}$ with $i_0 = n-2$.  We have 
\begin{align*}
	\lambda_{n-1}(i^+) &= 1_{S_d} = \mu_{n-1} (i^+), \ \text{for $i\ne i_0 , \ell_0$} \\[0.5em]
	\lambda_{n-1} (i_0^+) \lambda_{n-1} (\ell_0^+) &= \prod_{i=0}^{\ell_0} \lambda(i^+) \\[0.5em]
	 &= \prod_{i=0}^{\ell_0} \mu(i^+) =\mu_{n-1} (i_0^+) \mu_{n-1} (\ell_0^+) 
\end{align*}
So by the lemma, there's an isomorphism $C_{n,\lambda_{n-1}} \cong C_{n,\mu_{n-1}}$.  So by composing our isomorphism sequences:
$$
	C_{n,\lambda} \cong \ldots \cong C_{n,\lambda_{n-1}} \cong C_{n,\mu_{n-1}} \cong \ldots \cong C_{n,\mu}
$$
we obtain the desired result.
\end{proof}
\begin{lemma}
Let $\lambda \in \mathcal{L}_d (C_n)$ and $g \in S_d$.  Define $\lambda ^g \in \mathcal{L}_d (C_n)$ by 
$$
	\lambda^g (i^+) = (\lambda (i^+))^g = g^{-1} \lambda (i^+) g
$$
Then $C_{n,\lambda} \cong C_{n,\lambda^g}$.
\end{lemma}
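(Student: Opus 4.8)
The plan is to exhibit the isomorphism explicitly, mirroring the structure of the proof of Lemma~\ref{LEM1} but in a cleaner form, since here the permutation applied to each fiber is uniform across the whole graph rather than being supported near a single vertex. First I would record a preliminary consistency check: although $\lambda^g$ is prescribed only on the edges $i^+$, the labeling axiom $\lambda^g(\tau(e)) = \lambda^g(e)^{-1}$ forces $\lambda^g(i^-) = (g^{-1}\lambda(i^+)g)^{-1} = g^{-1}\lambda(i^-)g$, so in fact $\lambda^g(e) = g^{-1}\lambda(e)g$ for \emph{every} edge $e$, and $\lambda^g$ is genuinely an element of $\mathcal{L}_d(C_n)$.

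The natural candidate isomorphism $f = (f_V, f_E) : C_{n,\lambda} \to C_{n,\lambda^g}$ acts diagonally on all fibers by $g$ while fixing the base coordinate:
$$
	f_V(v, j) = (v, j^g), \qquad f_E(e, j) = (e, j^g).
$$
Since $j \mapsto j^g$ is a bijection of $[d]$ and the base coordinate is untouched, both $f_V$ and $f_E$ are immediately bijections; so once I check that $f$ is a morphism, it is automatically an isomorphism. To verify the two commuting diagrams in the definition of a morphism, the tail diagram is trivial because $t(e,j) = (t(e), j)$ in either cover, so both routes send $(e,j)$ to $(t(e), j^g)$. The head and $\tau$ diagrams are the substantive ones, and both collapse to the single identity $g \cdot (g^{-1}\lambda(e) g) = \lambda(e) g$ under the right-action convention. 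Concretely, chasing $(e,j)$ through the head diagram gives $f_V(h(e), j^{\lambda(e)}) = (h(e), j^{\lambda(e) g})$ on one route and $h_{\lambda^g}(e, j^g) = (h(e), j^{g\,\lambda^g(e)}) = (h(e), j^{\lambda(e) g})$ on the other; the $\tau$ diagram is identical with $h$ replaced by $\tau$.

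The only real subtlety — and where a sign- or order-of-composition slip would hide — is precisely this head-map computation, since it is the step that forces the choice of \emph{conjugation} $\lambda^g = g^{-1}\lambda g$ rather than any one-sided modification. The twist on head edges is governed by $\lambda(e)$, and applying $g$ to both the incoming and outgoing fiber coordinates is consistent with the cover structure of $C_{n,\lambda^g}$ only when the twist itself is conjugated by $g$; any other alteration of $\lambda$ would break commutativity of this square. Once this computation is in hand, the lemma follows immediately, and I expect the whole argument to be short and free of the casework that complicated Lemma~\ref{LEM1}.
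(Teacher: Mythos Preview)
Your proposal is correct and follows exactly the same approach as the paper: you define the identical map $f_V(i,j)=(i,j^g)$, $f_E(i^\pm,j)=(i^\pm,j^g)$, and in fact you supply more detail than the paper does (the paper simply states this map and leaves the verification that it is an isomorphism to the reader). Your added consistency check for $\lambda^g(i^-)$ and your explicit head-map computation are sound and make the argument self-contained.
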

\begin{proof}
We define a map $f = (f_V , f_E) : C_{n,\lambda} \to C_{n,\lambda^g}$ by 
\begin{align*}
	f_V (i,j) &= (i , j^g) \\[0.5em]
	f_E (i^{\pm} , j) &= (i^{\pm} , j^g)
\end{align*}
for all $i \in V(C_n)$ and all $j \in \{1 , \ldots , d\}$, and apply the definitions of the involved pieces to show that $f$ is an isomorphism.
\end{proof}

\begin{cor}\label{CongCor}
Let $\lambda , \mu \in \mathcal{L}_d (C_n)$.  Suppose that there exists $g \in S_d$ such that 
$$
	g^{-1}\left(\prod_{i=0} ^{n-1} \lambda(i^+)\right) g = \prod_{i=0}^{n-1} \mu(i^+).
$$
Then $C_{n,\lambda} \cong C_{n,\mu}$.
\end{cor}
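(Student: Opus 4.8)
The plan is to reduce the statement to the two results immediately preceding it by interposing the conjugated labeling $\lambda^g$. The key observation is that conjugation by a fixed $g \in S_d$ is a group homomorphism $S_d \to S_d$, so it distributes across the ordered product of positive-edge labels; this is the only computation in the argument, and it is a one-line telescoping.

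First I would introduce $\lambda^g \in \mathcal{L}_d(C_n)$ defined by $\lambda^g(i^+) = g^{-1}\lambda(i^+)g$, exactly as in the preceding Lemma. Computing the product of its positive edge labels, the intermediate factors $g\,g^{-1}$ cancel in pairs, giving
\[
	\prod_{i=0}^{n-1}\lambda^g(i^+) \;=\; \prod_{i=0}^{n-1} g^{-1}\lambda(i^+)g \;=\; g^{-1}\left(\prod_{i=0}^{n-1}\lambda(i^+)\right)g.
\]
By the hypothesis of the Corollary, the right-hand side equals $\prod_{i=0}^{n-1}\mu(i^+)$. Hence $\lambda^g$ and $\mu$ have \emph{equal} products of positive edge labels, which is precisely the hypothesis required by the first Corollary of this section.

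With that equality in hand, I would finish by chaining the two previously established isomorphisms. The first Corollary yields $C_{n,\lambda^g} \cong C_{n,\mu}$, while the preceding Lemma gives $C_{n,\lambda}\cong C_{n,\lambda^g}$. Composing these produces the desired isomorphism $C_{n,\lambda}\cong C_{n,\mu}$; here one uses only that isomorphism of symmetric graphs is transitive, since the componentwise composite $(g_V\circ f_V,\, g_E\circ f_E)$ of two isomorphisms is again an isomorphism with inverse built from the componentwise inverses. There is essentially no hard step: the entire content is the telescoping identity above, after which the conclusion is a direct appeal to the two prior results together with transitivity.
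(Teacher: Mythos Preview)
Your argument is correct and is exactly the intended one: the paper's proof reads simply ``Immediate from the previous two results,'' and you have spelled out precisely how those two results combine via the intermediate labeling $\lambda^g$ and the telescoping identity for the conjugated product. There is nothing to add or correct.
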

\begin{proof}
Immediate from the previous two results.
\end{proof}

\subsection{Symmetric graphs vs. undirected graphs}\label{SYMM-UND}
At certain stages of the development, it was useful to explicitly refer to an orientation on our graphs.  This is the essential reason for employing ``symmetric graphs'' as opposed to undirected graphs.  Notice however that all of the statements we've made for symmetric graphs hold for their undirected counterparts: an undirected graph is sent to a symmetric graph by ``splitting'' each edge into a pair of edges (one running each direction), and from a symmetric graph one obtains an undirected graph by simply conisdering the geometric edges (i.e. collapsing the pair of directed edges to a single undirected edge).  All of the isomorphisms we've discussed respect these two procedures.  

\subsection{Cycle types and isomorphism classes} \label{subsection:Comp}

Each element $\sigma \in S_d$ can be uniquely written (up to reordering) as a product of disjoint cycles of various lengths \cite{DF}.  This is called the \textbf{disjoint cycle decomposition of $\sigma$}.  The \textbf{cycle type} of $\sigma$ is the tuple $\mu=(\mu_1,\dots,\mu_d)$, where $\mu_i$ is the number of cycles of length $i$ in the disjoint cycle decomposition of $\sigma$: we write $\mathrm{cyc}(\sigma) = \mu$. Recall that the cycle type of a permutation completely determines its conjugacy class; $\sigma_1$ and $\sigma_2$ in $S_d$ are conjugate if and only if they have the same cycle type \cite{DF}.  

We finally arrive at our computational improvement for computing the $d$-matching polynomial of $C_n$.  For notational convenience, let $\mathcal{C}_m (x) = \mathcal{M}_{C_m} (x)$, the matching polynomial of $C_m$, and $\mathcal{C}_{n,d} (x) = \mathcal{M}_{C_n , d} (x)$ the $d$-matching polynomial of $C_n$.

\begin{prop}\label{Prop:WARMUP}
For any positive integers $n$ and $d$,
$$
	\mathcal{C}_{n,d} (x) = \frac{1}{d!} \sum_{\substack{\sigma \in S_d \\ \mathrm{cyc}(\sigma) =  \mu}} \prod_{i=1}^d \mathcal{C}_{n \cdot \mu_i} (x),
$$
where the summation is over all $\sigma\in S_d$.
\end{prop}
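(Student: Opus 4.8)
The plan is to start from the definition
$$\mathcal{C}_{n,d}(x) = \frac{1}{|\mathcal{L}_d(C_n)|}\sum_{\lambda \in \mathcal{L}_d(C_n)} \mathcal{M}_{C_{n,\lambda}}(x)$$
and reorganize the sum according to the single group element $P(\lambda) := \prod_{i=0}^{n-1}\lambda(i^+) \in S_d$. First I would record two elementary counting facts about $\mathcal{L}_d(C_n)$. Since the condition $\lambda(\tau e) = \lambda(e)^{-1}$ means that a labeling is freely and uniquely determined by its values on the $n$ positive edges $0^+,\dots,(n-1)^+$, we get $|\mathcal{L}_d(C_n)| = (d!)^n$. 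Moreover, for any fixed $\sigma \in S_d$ the number of labelings with $P(\lambda) = \sigma$ is exactly $(d!)^{n-1}$: one may choose $\lambda(0^+),\dots,\lambda((n-2)^+)$ arbitrarily, after which $\lambda((n-1)^+)$ is forced to equal $\bigl(\prod_{i=0}^{n-2}\lambda(i^+)\bigr)^{-1}\sigma$.

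Next I would invoke the Corollary immediately following Lemma \ref{LEM1}, which guarantees that if $P(\lambda) = P(\mu)$ then $C_{n,\lambda} \cong C_{n,\mu}$, and hence $\mathcal{M}_{C_{n,\lambda}}(x) = \mathcal{M}_{C_{n,\mu}}(x)$. Thus the summand depends on $\lambda$ only through $P(\lambda)$, and writing $\mathcal{M}_\sigma(x)$ for this common value I may collapse the sum into a sum over group elements:
$$\mathcal{C}_{n,d}(x) = \frac{1}{(d!)^n}\sum_{\sigma \in S_d}(d!)^{n-1}\,\mathcal{M}_\sigma(x) = \frac{1}{d!}\sum_{\sigma \in S_d}\mathcal{M}_\sigma(x).$$

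It then remains to identify $\mathcal{M}_\sigma(x)$ explicitly. Here I would choose the convenient representative labeling with $\lambda(0^+) = \sigma$ and $\lambda(i^+) = 1_{S_d}$ for $i \geq 1$, and describe the resulting cover concretely: tracing the sheet labels once around the base cycle sends sheet $j$ to $j^\sigma$, so the connected component of $C_{n,\lambda}$ through a given sheet wraps once around the base per step of the $\sigma$-orbit of $j$ and closes up after $\ell$ trips, where $\ell$ is the length of that orbit. Consequently each cycle of length $\ell$ in the disjoint cycle decomposition of $\sigma$ contributes a component isomorphic to $C_{n\ell}$, giving $C_{n,\lambda} \cong \coprod_{c} C_{n\cdot|c|}$ over the cycles $c$ of $\sigma$ — exactly the cover of Example \ref{EX1}(3) with the cycle lengths of $\sigma$ playing the role of $(\mu_1,\dots,\mu_k)$. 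Since a matching of a disjoint union is precisely a choice of matching in each component and the vertex counts add, the matching polynomial is multiplicative over disjoint unions, so $\mathcal{M}_\sigma(x) = \prod_{c}\mathcal{C}_{n\cdot|c|}(x)$; grouping the cycles by common length recovers $\prod_{i=1}^d$ in terms of $\mathrm{cyc}(\sigma)=\mu$. Substituting into the collapsed sum yields the claimed formula.

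I expect the main obstacle to lie in the cover-decomposition step: carefully verifying, from the head, tail, and $\tau$ formulas of the $S_d$-labeling construction, that a $\sigma$-orbit of length $\ell$ really assembles into $C_{n\ell}$ (and not some other graph), and that distinct orbits yield disconnected components. This is a monodromy bookkeeping argument, and the cleanest route is to exhibit an explicit isomorphism onto $\coprod_i C_{n\mu_i}$ and appeal to the already-constructed maps $\pi_{\mu_i}$ of Example \ref{EX1}(3), rather than re-deriving connectivity by hand. By contrast, the counting and collapsing steps are routine once the isomorphism Corollary is available.
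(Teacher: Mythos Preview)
Your proposal is correct and follows essentially the paper's own argument: associate to each labeling the product $\prod_i \lambda(i^+)$, use the isomorphism results of Section~\ref{section:Comp} to see that $\mathcal{M}_{C_{n,\lambda}}$ depends only on this product, and then identify the cover as $\coprod_c C_{n|c|}$ via Example~\ref{EX1}(3) and multiplicativity of the matching polynomial. The only cosmetic difference is that you invoke the weaker first corollary (equal products suffice) and make the fiber count $(d!)^{n-1}/(d!)^n = 1/d!$ explicit, whereas the paper cites the conjugacy version (Corollary~\ref{CongCor}) and leaves the counting implicit; since the final sum runs over all of $S_d$, your weaker lemma is indeed all that is needed.
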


\begin{proof}
We recall that by Definition \ref{DEF1},
$$ 
	\mathcal{C}_{n,d} (x) = \frac{1}{|\mathcal{L}_d (C_n) |} \sum_{\lambda \in \mathcal{L}_d (C_n)} \mathcal{M}_{C_{n,\lambda}} (x).
$$
To each $\lambda \in \mathcal{L}_d (C_n)$ we associate the permutation 
$$
	\lambda (C_n) := \prod_{i=0}^{n-1} \lambda(i^+)
$$
By Corollary \ref{CongCor}, we can identify the isomorphism class of $C_{n,\lambda}$ with the conjugacy class in $S_d$ containing $\lambda(C_n)$. Proposition \ref{Prop:WARMUP} therefore follows from the observation that if $\mathrm{cyc}(\lambda (C_n)) = \mu = (\mu_1, \ldots , \mu_d)$, then 
$
	C_{n,\lambda} \cong \coprod _{i=1}^d C_{n\cdot \mu_i}
$
(cf. item $(3)$ in Example \ref{EX1}).  
\end{proof}

\section{Combinatorial Proofs for Chebyshev Identities}\label{section:Combproofs}

Let $\mathcal{C}_n (x)$ denote $\mathcal{M}_{C_n} (x)$, $\mathcal{P}_n (x)$ denote $\mathcal{M}_{P_n} (x)$, and $\mathcal{C}_{n,d} (x)$ denote $\mathcal{M}_{C_{n,d}} (x)$. As is shown in \cite{Godsil}, there are important relationships between the matching polynomials $\mathcal{C}_n(x)$ and $\mathcal{P}_n(x)$ and the Chebyshev polynomials of the first and second kind respectively.

\begin{definition}
	The Chebyshev polynomials of the first kind, $T_n(x)$ are defined by the following recurrence relation:
	\begin{itemize}
		\item $T_0(x)=1$,
		\item $T_1(x)=x$, and 
		\item $T_{n+1}(x)=2xT_n(x)-T_{n-1}(x)$.
	\end{itemize}
\end{definition}

\begin{definition}
	The Chebyshev polynomials of the second kind, $U_n(x)$ are defined by the following recurrence relation:
	\begin{itemize}
		\item $U_0(x)=1$,
		\item $U_1(x)=2x$, and 
		\item $U_{n+1}(x)=2xU_n(x)-U_{n-1}(x)$.
	\end{itemize}
\end{definition}

\begin{lemma}
	$\frac{1}{2}\mathcal{C}_n(2x)=T_n(x)$ and $\mathcal{P}_n(2x)=U_n(x)$
\end{lemma}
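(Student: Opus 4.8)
We must prove that $\frac{1}{2}\mathcal{C}_n(2x)=T_n(x)$ and $\mathcal{P}_n(2x)=U_n(x)$, where $\mathcal{C}_n(x) = \mathcal{M}_{C_n}(x)$ and $\mathcal{P}_n(x) = \mathcal{M}_{P_n}(x)$ are the matching polynomials of cycle and path graphs.

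The cleanest strategy: both Chebyshev families are defined by a second-order linear recurrence with the SAME recurrence relation $f_{n+1} = 2x f_n - f_{n-1}$, differing only in initial conditions. So if I can establish:
1. The matching polynomials satisfy an analogous recurrence
2. The initial conditions match

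Then induction finishes immediately.

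**Key recurrence for matching polynomials.** There's a well-known recurrence. For a path $P_n$ (on $n$ vertices, with $n-1$ edges), consider the last edge $e_{n-2}$ connecting vertex $n-2$ to $n-1$. A matching either excludes this edge (giving matchings of $P_{n-1}$) or includes it (giving matchings of $P_{n-2}$ times the $x$-weight loss). Concretely:
$$\mathcal{P}_n(x) = x\,\mathcal{P}_{n-1}(x) - \mathcal{P}_{n-2}(x).$$
Let me verify the degree/sign bookkeeping. $\mathcal{M}_G(x) = \sum_i (-1)^i a(G,i) x^{n-2i}$. Deleting a vertex (as we'd do for the edge-in case) reduces vertex count by 2 and adds one to matching size, contributing the $(-1)$ and dropping degree by 2 appropriately. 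The standard recurrence is indeed $\mathcal{P}_n = x\mathcal{P}_{n-1} - \mathcal{P}_{n-2}$ with $\mathcal{P}_0 = 1$, $\mathcal{P}_1 = x$.

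Substituting $x \to 2x$: $\mathcal{P}_n(2x) = 2x\,\mathcal{P}_{n-1}(2x) - \mathcal{P}_{n-2}(2x)$. This is exactly the $U_n$ recurrence! Check initial conditions: $\mathcal{P}_0(2x) = 1 = U_0(x)$ ✓, and $\mathcal{P}_1(2x) = 2x = U_1(x)$ ✓. Wait—but $P_0$ is the empty graph. Its matching polynomial is the empty product, $= 1$ (degree 0). $P_1$ is a single vertex, matching poly $= x$. Good. Induction complete for the path case.

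**Cycle case.** For the cycle $C_n$, I use a similar edge-deletion on a chosen edge $e_{n-1}$ (connecting $n-1$ to $0$). Standard result:
$$\mathcal{C}_n(x) = \mathcal{P}_n(x) - \mathcal{P}_{n-2}(x).$$
The idea: matchings of $C_n$ either don't use edge $e_{n-1}$ (these are matchings of the path $P_n$ obtained by cutting that edge) or do use it (removing vertices $n-1, 0$ leaves a path on the remaining $n-2$ vertices, with appropriate $(-1)$ sign and degree drop — giving $-\mathcal{P}_{n-2}$). Let me confirm this gives the right Chebyshev relation.

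$$\tfrac{1}{2}\mathcal{C}_n(2x) = \tfrac{1}{2}\big(\mathcal{P}_n(2x) - \mathcal{P}_{n-2}(2x)\big) = \tfrac{1}{2}\big(U_n(x) - U_{n-2}(x)\big).$$

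There's a classical Chebyshev identity: $U_n - U_{n-2} = 2T_n$. Let me verify: using $U_n = 2xU_{n-1} - U_{n-2}$, we get $U_n + U_{n-2} = 2xU_{n-1}$... that's not quite it. Let me use the known identity $T_n = \frac{1}{2}(U_n - U_{n-2})$. Check: $U_n - U_{n-2}$. With $U_0=1, U_1 = 2x, U_2 = 4x^2-1$. Then $U_2 - U_0 = 4x^2 - 2 = 2(2x^2-1) = 2T_2$ ✓ (since $T_2 = 2x^2 - 1$). And $U_1 - U_{-1}$: defining $U_{-1} = 0$, gives $2x = 2T_1$ ✓. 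Good.

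So $\tfrac{1}{2}\mathcal{C}_n(2x) = \tfrac{1}{2}(U_n(x) - U_{n-2}(x)) = T_n(x)$.

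**Main obstacle.** The edge-deletion recurrences $\mathcal{P}_n = x\mathcal{P}_{n-1} - \mathcal{P}_{n-2}$ and $\mathcal{C}_n = \mathcal{P}_n - \mathcal{P}_{n-2}$ must be proven with careful sign/degree bookkeeping from the definition in DEF1. This combinatorial setup is the crux; the Chebyshev comparison is then routine induction.
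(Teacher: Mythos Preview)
Your proof is correct and follows essentially the same approach the paper points to: the paper does not give its own argument but defers to Godsil, noting that the ``three term recurrences'' establish the lemma, and that is exactly what you do for $\mathcal{P}_n$. For the cycle you take a slight variant, using the edge-deletion identity $\mathcal{C}_n=\mathcal{P}_n-\mathcal{P}_{n-2}$ together with the Chebyshev relation $U_n-U_{n-2}=2T_n$ rather than a direct three-term recurrence $\mathcal{C}_{n+1}=x\,\mathcal{C}_n-\mathcal{C}_{n-1}$; this is a cosmetic difference and both routes are standard.
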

For a proof of these facts, see chapters $1$ and $8$ of \cite{Godsil} where the ``three term recurrences'' are used to prove the above Lemma.

\subsection{Commutativity} \label{subsection:Commute}

The Chebyshev polynomials of the first kind also have a special relationship with respect to composition.
$$ 
    T_{kn}(x) = T_k(T_n(x)).
$$
In particular, this implies that the Chebyshev polynomials are commuting operators. We may substitute $T_n(x)=\frac12\mathcal{C}_n(2x)$ to obtain the following combinatorial statement:
$$
    \mathcal{C}_{kn}(x) = \mathcal{C}_k(\mathcal{C}_n(x))
$$

In this section we provide a combinatorial proof of this statement, based off a similar proof given by Walton in \cite{Baddad}. 
Our main goal is to show $\mathcal{C}_{kn}=\mathcal{C}_k(\mathcal{C}_n)$, but first we must algebraically reduce this to a combinatorial identity for which we can give a proof. Given a list of integers $\alpha=(\alpha_1,\dots,\alpha_\ell)$, let $S(\alpha)$ denote their sum, write $\alpha\vDash S(\alpha)$, and denote the length of the list by $|\alpha|$. We then have:
\begin{align*}
	\mathcal{C}_k(\mathcal{C}_n)&=\sum_{i\geq0}(-1)^ia(C_k,i)\left(\mathcal{C}_n\right)^{k-2i}\\
	&=\sum_{i\geq0}(-1)^ia(C_k,i)\left(\sum_{\ell \geq 0}(-1)^\ell a(C_n,\ell)x^{n-2\ell}\right)^{k-2i}\\
	&=\sum_{i\geq0}(-1)^ia(C_k,i)\left(\sum_{\vert \alpha \vert =k-2i}(-1)^{S(\alpha)}\left(\prod_{j=1}^{k-2i}a(C_n,\alpha_j)\right)x^{(k-2i)n-2S(\alpha)}\right).
\end{align*}
Breaking the innermost sum apart by $S(\alpha)$, we make the substitution $m=in+S(\alpha)$.
\begin{align*}
	\mathcal{C}_k(\mathcal{C}_n)&=\sum_{i\geq 0}(-1)^{i}a(C_k,i)\sum_{m\geq in}\left(x^{nk-2m}\sum_{\substack{\alpha \vDash m-in\\\vert \alpha \vert = k-2i}}(-1)^{m-in}\prod_{j=1}^{k-2i}a(C_n,\alpha_j)\right)\\
	&=\sum_{m\geq 0}\left(\sum_{i=0}^{\left\lfloor \frac{m}{n}\right\rfloor}(-1)^{i(1-n)}a(C_k,i)\left(\sum_{\substack{\alpha \vDash m-in\\\vert \alpha \vert = k-2i}}\prod_{j=1}^{k-2i}a(C_n,\alpha_j)\right)\right)(-1)^mx^{nk-2m}
\end{align*}

Let $\ell G$ denote the $\ell$-fold disjoint union $\coprod_{i=1} ^\ell G$ for any positive integer $\ell$ and any graph $G$.  Notice that the innermost sum counts matchings of size $m-in$ on $(k-2i)C_n$. Moreover, there are no such matchings when $m\geq in$, so we can remove the upper index on the sum over $i$.
\begin{align*}
	\mathcal{C}_k(\mathcal{C}_n)&=\sum_{m\geq 0}\left(\sum_{i\geq 0}(-1)^{i(1-n)}a(C_k,i)a((k-2i)C_n,m-in)\right)(-1)^mx^{nk-2m}\tag{$*$}
\end{align*}

Interpreting the coefficients above: when $i$ edges are matched in $C_k$, we only consider $(k-2i)$ copies of $C_n$, and only include $m-2i$ edges in the matching. This suggests the following definitions.

\begin{definition}
For fixed $n$ and $k$, a \textbf{metacycle} is a $k$-cycle where each vertex represents an $n$-cycle. A \textbf{matching} of a metacycle is a matching of the $k$-cycle together with, for each vertex not contained in any such edge, a matching of the corresponding $C_n$. 

Denote by $e_k$ the number of edges of the matching in the $k$-cycle and by $e_n$ the number of edges of the matching in the $n$-cycles. Then the \textbf{weight} of a metacycle matching is $e_n + ne_k$, and the \textbf{sign} of a metacycle matching is $(-1)^{(1-n)e_k}$.
\end{definition}

With this terminology, we see that the coefficients of $\mathcal{C}_k(\mathcal{C}_n)$ are either a sum, or an alternating sum, over matchings of a metacycle. The remainder of this section is devoted to showing that these sums also count matchings in $C_{kn}$.

\begin{prop}\label{prop:comm}
 $\mathcal{C}_{kn}(x) = \mathcal{C}_k(\mathcal{C}_n(x))$.
\end{prop}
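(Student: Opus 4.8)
The algebra culminating in equation $(*)$ has already rewritten $\mathcal C_k(\mathcal C_n)$ as $\sum_{m\ge 0}(-1)^m x^{nk-2m}$ times the signed metacycle count, while by definition $\mathcal C_{kn}(x)=\sum_{m\ge 0}(-1)^m a(C_{kn},m)\,x^{kn-2m}$. Comparing coefficients of $x^{kn-2m}$, the plan is to prove, for each $m$, the numerical identity
\[
a(C_{kn},m)=\sum_{i\ge 0}(-1)^{(1-n)i}\,a(C_k,i)\,a\big((k-2i)C_n,\;m-in\big),
\]
that is, that the signed number of weight-$m$ matchings of the metacycle equals the number of size-$m$ matchings of $C_{kn}$. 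I would establish this bijectively, by exhibiting a weight-preserving, sign-reversing involution on the weight-$m$ metacycle matchings whose surviving (fixed) configurations are carried, by an explicit ``unfolding'' map, onto the size-$m$ matchings of $C_{kn}$.

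To build the correspondence I would first realize $C_{kn}$ as $k$ consecutive \emph{blocks} $B_0,\dots,B_{k-1}$ of $n$ vertices, identifying meta-vertex $j$ with $B_j$ and meta-edge $\{j,j+1\}$ with the \emph{boundary} edge joining the last vertex of $B_j$ to the first vertex of $B_{j+1}$. The edges of $C_{kn}$ are then the $k$ boundary edges together with the block-internal edges, which turn each $B_j$ into a path $P_n$. Two features obstruct a naive block-by-block assembly. First, an uncovered meta-vertex carries a matching of the full cycle $C_n$, while $B_j$ only offers the path $P_n$; the extra edge is exactly the \emph{wrap} edge of $C_n$, which is not an edge of $C_{kn}$. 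Second, a meta-edge has weight $n$, which is precisely the size of the unique perfect matching of the $2n$-vertex path formed by two adjacent blocks and their boundary edge; so a single meta-edge must be traded not for one edge but for $n$ edges covering $2n$ vertices.

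The involution and the unfolding are designed around these two features. A meta-edge on $\{j,j+1\}$ unfolds to the unique perfect matching of the double block $B_j\cup B_{j+1}$: when $n$ is odd this matching uses the boundary edge and produces a genuine sub-matching of $C_{kn}$ of the \emph{same} sign, whereas when $n$ is even it uses neither boundary edge and perfectly matches $B_j$ and $B_{j+1}$ internally --- a configuration also produced by two uncovered, internally perfect-matched blocks, but of the \emph{opposite} sign, so the two cancel. Quantitatively, replacing a meta-edge by the $n$ edges (sign $(-1)^n$) that cover its $2n$ vertices alters the sign by $(-1)^{n-1}=(-1)^{1-n}$, and over $i$ meta-edges this accumulates to the advertised factor $(-1)^{(1-n)i}$. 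A wrap edge used inside an uncovered block is the remaining defect; it is removed by a cyclic re-routing that turns it into a boundary edge (or, with the opposite sign, cancels it against a neighbouring configuration). After all cancellations the fixed configurations contain neither the problematic meta-edges nor wrap edges, so they unfold cleanly into honest matchings of $C_{kn}$, each obtained exactly once.

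The main obstacle is making this prescription a \emph{well-defined} involution rather than a heuristic. Several adjacent blocks may be simultaneously perfect-matched, and several wrap and boundary edges may compete, so one must pin down a canonical site for the toggle --- for instance the first block boundary encountered in a cyclic scan from a fixed basepoint at which the local rule applies --- and then verify that toggling there is self-inverse, leaves the weight $m$ unchanged, and reverses the sign on every non-fixed configuration. Controlling the cyclic ``phase'' ambiguity at the wrap-around of $C_{kn}$, and confirming that the parity of $n$ enters only through the sign $(-1)^{(1-n)i}$ and nowhere else, is the delicate heart of the argument; once the involution is shown to be well defined and sign-reversing, the identification of its fixed points with matchings of $C_{kn}$ is routine bookkeeping.
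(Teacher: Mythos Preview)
Your overall plan matches the paper's framework: a sign-reversing, weight-preserving involution on metacycle matchings, with the surviving configurations carried bijectively to matchings of $C_{kn}$. Your toggle for even $n$ (a meta-edge versus two internally perfect-matched uncovered blocks) is close in spirit to the paper's even-case involution.

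The genuine gap is in the treatment of the hop/wrap edges. These contribute to the weight $e_n$ but \emph{not} to the sign $(-1)^{(1-n)e_k}$, so your parenthetical ``or, with the opposite sign, cancels it against a neighbouring configuration'' cannot apply to a wrap edge. In particular, when $n$ is odd every metacycle matching has sign $+1$ and no cancellation whatsoever is available: you must produce an honest bijection between \emph{all} weight-$m$ metacycle matchings --- hops and meta-edges included --- and size-$m$ matchings of $C_{kn}$. Turning a hop in block $B_j$ into a boundary edge forces you to occupy a vertex of the neighbouring block; the resulting conflict propagates through runs of consecutive hops, and when the run terminates the last hop-block must somehow be merged with the first non-hop block. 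Your ``cyclic re-routing'' gestures at this but does not say how the merge is performed or why it is invertible, and this is where essentially all of the combinatorics lives.

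The paper resolves exactly this point by running the map in the opposite direction. A matching of $C_{kn}$ is decomposed into \emph{zag components} (maximal runs of rows joined by matched boundary edges); each interior row of a component contributes its row-matching together with a hop to the corresponding $C_n$, and the two end rows are fused via a \emph{tail-swap} at the first column where neither end row carries a matched edge, after which the swapped rows are recorded (one of them also with a hop). This tail-swap is precisely the device that makes the hop/boundary correspondence invertible in the odd case. For even $n$ the tail-swap column may fail to exist, and the paper's involution (in its $H/r/\mu$ coding, $H^a\mu\mu \leftrightarrow H^{a+1}r$ with perfectly matched terminal block) disposes of the leftover metacycle matchings. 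So your outline is on the right track, but the substantive content of the proof sits exactly where you flagged the ``main obstacle,'' and the missing ingredient is this tail-swapping mechanism.
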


\begin{proof}
Our $C_{kn}$ will be viewed as $k$ rows of $n$ vertices, numbered from the top, with the edge between row $i$ and $i+1$ from $v_{i,n}$ to $v_{i+1,1}$. We call these edges \textbf{zags}. In the copies of $C_n$ in the metacycle, we will call the edge from $v_1$ to $v_n$ a \textbf{hop}.
The relationship between the two objects can be seen in the picture below.

\begin{center}
\begin{tikzpicture}[scale=.6,auto=left,every node/.style={circle,draw,minimum size=4.5pt,inner sep=2.5pt,fill=blue!20}]
\node (n0) at (0,0) {} ;
\node (n1) at (1.5,0) {} ;
\node (n2) at (3,0) {} ;
\node (n3) at (4.5,0) {} ;
\node (n4) at (6,0) {} ;
\node (m0) at (0,3) {} ;
\node (m1) at (1.5,3) {} ;
\node (m2) at (3,3) {} ;
\node (m3) at (4.5,3) {} ;
\node (m4) at (6,3) {} ;
\node (p0) at (0,6) {} ;
\node (p1) at (1.5,6) {} ;
\node (p2) at (3,6) {} ;
\node (p3) at (4.5,6) {} ;
\node (p4) at (6,6) {} ;
\foreach \from/\to in {n0/n1,n1/n2,n2/n3,n3/n4,m0/m1,m1/m2,m2/m3,m3/m4,p0/p1,p1/p2,p2/p3,p3/p4,n0/m4,m0/p4}
\draw[thick] (\from) to (\to);
\draw[thick] (p0)..controls (-4,-2) and (-2,-4) ..(n4);
\draw[|-|] (0,6.5) to (6,6.5);
\node[draw=none,fill=none] at (3,7) {$n=5$};
\draw[|-|] (6.75,0) to (6.75,6);
\node[draw=none,fill=none] at (7.75,3) {$k=3$};
\node[draw=none,fill=none] at (3,8) {$\mathcal{C}_{kn}$};
\node[draw=none,fill=none] at (5.5,4.5) {zag};
\draw[->] (5,4.5) to (3.75,4.5);
\node (a0) at (0+12,0) {} ;
\node (a1) at (1.5+12,0) {} ;
\node (a2) at (3+12,0) {} ;
\node (a3) at (4.5+12,0) {} ;
\node (a4) at (6+12,0) {} ;
\node (b0) at (0+12,3) {} ;
\node (b1) at (1.5+12,3) {} ;
\node (b2) at (3+12,3) {} ;
\node (b3) at (4.5+12,3) {} ;
\node (b4) at (6+12,3) {} ;
\node (c0) at (0+12,6) {} ;
\node (c1) at (1.5+12,6) {} ;
\node (c2) at (3+12,6) {} ;
\node (c3) at (4.5+12,6) {} ;
\node (c4) at (6+12,6) {} ;
\foreach \from/\to in {a1/a2,a2/a3,a3/a4,b1/b2,b2/b3,b3/b4,c1/c2,c2/c3,c3/c4,a0/b0,b0/c0}
\draw[thick] (\from) to (\to);
\draw[thick] (a1)..controls (15.75,1.5)  ..(a4);
\draw[thick] (b1)..controls (15.75,4.5)  ..(b4);
\draw[thick] (c1)..controls (15.75,7.5)  ..(c4);
\draw[thick] (a0)..controls (10.5,3)  ..(c0);
\node[draw=none,fill=none] at (15.75,5.5) {hop};
\draw[->] (15.75,5.25) to (15.75,4.5);
\node[draw=none,fill=none] at (14.25,8) {$\mathcal{C}_k (\mathcal{C}_n)$};
\draw[very thick,->] (7.75,4) to (10,4);
\draw[very thick,->] (10,2) to (7.75,2);
\end{tikzpicture}
\end{center}

Let $M$ be a matching of size $m$ in $C_{kn}$. We will decompose the vertex set of $C_{kn}$ based on our matching. A \textbf{zag component} is the subgraph induced by vertices $Z$, where $Z$ is the set of vertices in a maximal collection of adjacent rows where all internal zags are present in our matching. If there is a row in our graph without an incoming or outgoing matched zag, then the row is a zag component on its own. Note that except in the case where all zags are in our matching, we can shift our cycle so that the first zag component starts on the first row.

Given a zag component, we will mark edges to include in a metacycle matching. There are three types of zag components:
\begin{enumerate}
\item If a zag component consists of a single row $i$: we mark all the edges in the $C_n$, represented by vertex $i$ of the metacycle, which correspond to the edges of $M$ in row $i$.
\item If a zag component from row $i$ to row $j$ has all vertices in rows $i$ and $j$ completely matched, we mark the edge in the metacycle from $j-1$ to $j$. Then for $\ell \in [i+1,j-1]$, we mark all the edges on the $C_n$, represented by vertex $\ell -1 $ of the metacycle, which correspond to the edges of $M$ in row $\ell$. Also, we mark the hop in that $C_n$.
\item If a zag component from row $i$ to row $j$ does not completely match rows $i$ and $j$ for $\ell \in [i+1,j-1]$, we again mark the edges on the $C_n$ represented by vertex $\ell -1$ of the metacycle, which correspond to the edges of $M$ in row $\ell$. Again, we also mark the hop in that $C_n$. 

Now let $t$ be the smallest index $1\leq t \leq n-1$ such that $v_{i,t} \sim v_{i,t+1}$ and $v_{j,t} \sim v_{j,t+1}$ are not edges in our matching. We modify rows $i$ and $j$ of $C_{kn}$ by swapping the tails of these rows to the right of vertex $t$. We then mark the edges on the $C_n$, represented by vertex $j$ in the metacycle, which correspond to the edge of $M$ in the modified row $i$. Finally, we mark the edges on the $C_n$, represented by vertex $j-1$ in the metacycle, which correspond to the edge of $M$ in the modified row $j$; and also the hop in that $C_n$.
\end{enumerate}

If there is only one zag component (i.e. each zag edge is in our matching), we can choose row $i$ to be $1$, row $j$ to be $k$, and then treat this as case (3).

This produces a matching of the correct weight in our metacycle. To see this is a matching, we only need check that there are no incident edges in each $C_n$ (since by construction two edges in the $C_k$ cannot be adjacent). Since each row on the interior of a zag component has an incoming and outgoing matched zag edge, $v_{\ell, 1}$ and $v_{\ell, n}$ are free to be matched by a hop in case (2). In case (3), the tail-swapping swaps any potential edge including $v_{j,n}$, and $v_{j,1}$ is matched with a zag in $C_{kn}$. Hence, we are again free to match it with a hop in the metacycle. 

To verify that this matching has the right weight, we need to show that the weight of the metacycle matching is the same as the size of $M$. Cases (1) and (3) are bijective on edges, and case (2) replaces $n-1$ edges in rows $i$ and $j$ and the edge $v_{j-1,n}v_{j,1}$ with an edge of weight $n$ on the $C_k$. 

Unfortunately, this procedure is not always reversible. However, we claim that for any metacycle matching $\widetilde M$ of weight $m$, either the procedure can be reversed, or else $\widetilde M$ can be matched with a metacycle matching of weight $m\pm 1$.

We can label each cycle $i$ with either an $H$ if a hop is taken, $r$ is no hop is taken, or $\mu$ if either $i\sim i+1$ or $i-1 \sim i$ is an edge of $\widetilde M$ in the $C_k$. Consider the string for $\widetilde M$ consisting of these letters. Since each $\mu$ must come in a pair, the language corresponding to these strings is generated by $\{H,r,\mu\mu\}$. 

\begin{lemma}
The cyclic strings of length $k$ generated by $\{H,r,\mu\mu\}$ are also generated by $(\{H^a\mu\mu: a\geq 0\}\cup \{H^ar: a\geq 0\}\cup{H^k})$. The second set of generators form a prefix free language, hence form a uniquely decodable code.
\end{lemma}
\begin{proof}
$r$ and $MM$ are in our set by setting $a=0$ and the only way we can't attach $H$'s to them is if our whole string is $H$'s. These generators are clearly prefix free.
\end{proof}

Except in the case where our string is $H^k$, we can shift our cycles again so that cycle $k$ is labeled with the second $\mu$ in a pair or by an $r$. By the lemma, we can uniquely decode the string into generators of the form $H^ar, H^k,$ or $H^a\mu\mu$. 

Due to the difference in matchings of $P_{2k}$ versus $P_{2k+1}$, we will consider cases by parity.
Odd case:\\
When $n$ is odd, each of these generators corresponds to one of the three zag components described earlier. Type (1) corresponds to $H^0r$, Type (2) corresponds to $H^a\mu\mu$, and $\{H^ar, a>0\}$ and $H^k$ correspond to Type (3). These three cases are also all in the image of our described matching above. First, we can perfectly match the top and bottom row of a zag component to get case (2). Moreover, since $n$ is odd, when we don't have a perfect matching we always have a vertex after which to tail-swap, so the opposite procedure for Type (3) is well-defined.

Even case:\\
When $n$ is even, there are two types of metacycle matchings which are not constructed by applying the Cases above:
\begin{enumerate}
\item $\mu\mu$ appears in the string
\item There is a hop component of the form $H^ar$ with first and last cycle completely matched.
\end{enumerate} 

We only see $\mu\mu$ in the string if there is a zag component with top and bottom row perfectly matched. When $n$ is even, this is impossible, since the outgoing zag from the first row leaves an odd number of vertices to be perfectly matched. In the second case, if the first and last cycle are completely matched, there is no vertex $t$ from which we can swap the tails of our path, so it is not produced by such a matching in $C_{nk}$.

We now construct an involution $f$ on these two types of metacycle matchings, which both reverses signs and preserves weights.

Suppose we have some $\mu\mu$ in our string. Consider the first hop component of the form $H^a\mu\mu$. Note that $\widetilde M$ contains no edges in two cycles corresponding to the $\mu\mu$, by definition of a metacycle matching. Let $i$ be the vertex corresponding to the first $H$ and $j$ be the vertex corresponding to the first $\mu$. Then $f$ maps the matchings of cycles $i$ through $j-1$ to the cycles $i+1$ through $j$ and replaces the edge on the $C_k$ with the (unique) perfect matching of cycle $j+1$ not using the hop, and the perfect matching of cycle $i$ using the hop. This produces a new string where the $H^a\mu\mu$ is replaced with an $H^{a+1}r$ with the top and bottom rows completely matched. This pairs up the metacycle matchings not produced by applying the Cases above. Since it also changes the number of edges in the $C_k$ by exactly one, it is a sign-reversing, weight-preserving involution, as desired.

Hence for both odd and even $n$, the coefficients of $x^{nk-2m}$ in ($*$) match those of $\mathcal C_{kn}(x)$; thus the polynomials are equal.

\end{proof}

\section{Proof of the Conjecture of C. Hall}\label{section:Main}

A similar combinatorial proof can be used to show $$
	\mathcal{P}_{dn + n -1}(x) = \mathcal{P}_d (\mathcal{C}_n (x)) \mathcal{P}_{n-1}(x),
$$
where we replace the metacycle with a $P_d$ with an $n$ cycle corresponding to each vertex and an additional $P_{n-1}$ on the bottom. (Alternatively, the equivalent divisibility relation for Chebyshev polynomials is classical.)
Dividing both sides by $\mathcal P_{n-1}(x)$, we see that Conjecture \ref{conj:Hall} is equivalent to

\begin{theorem}\label{thm:main}
$\mathcal{C}_{n,d} (x) = \mathcal{P}_d (\mathcal{C}_n(x))$.
\end{theorem}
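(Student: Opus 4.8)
The plan is to reduce Theorem \ref{thm:main} to a single polynomial identity in one variable and then verify that identity using the cycle index of the symmetric group. First I would invoke Proposition \ref{Prop:WARMUP}, rewriting the $d$-matching polynomial as an average over $S_d$ of the matching polynomials of the associated covers. Since the cover attached to $\sigma \in S_d$ is the disjoint union $\coprod_c C_{nc}$, where $c$ runs over the lengths of the cycles in the disjoint cycle decomposition of $\sigma$, and the matching polynomial of a disjoint union is the product of the matching polynomials of its pieces, this gives
\begin{equation*}
	\mathcal{C}_{n,d}(x) = \frac{1}{d!} \sum_{\sigma \in S_d} \prod_{c} \mathcal{C}_{nc}(x),
\end{equation*}
the inner product being over the cycles $c$ of $\sigma$. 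Next I would apply Proposition \ref{prop:comm} to each factor, replacing $\mathcal{C}_{nc}(x)$ by $\mathcal{C}_c(\mathcal{C}_n(x))$.

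Writing $y := \mathcal{C}_n(x)$, the previous step turns the claim $\mathcal{C}_{n,d}(x) = \mathcal{P}_d(\mathcal{C}_n(x))$ into the purely algebraic identity
\begin{equation*}
	\mathcal{P}_d(y) = \frac{1}{d!} \sum_{\sigma \in S_d} \prod_{c} \mathcal{C}_c(y),
\end{equation*}
which is an identity of polynomials in the single indeterminate $y$; once it is known it is legitimate to substitute $y = \mathcal{C}_n(x)$ into it. Grouping the permutations by cycle type $1^{a_1} 2^{a_2} \cdots$ (there are $d! / \prod_i i^{a_i} a_i!$ permutations of each type) rewrites the right-hand side as the cycle index of $S_d$ evaluated at $p_i = \mathcal{C}_i(y)$, namely $\sum_{\sum_i i a_i = d} \prod_i \mathcal{C}_i(y)^{a_i} / (i^{a_i} a_i!)$.

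To finish I would pass to generating functions. By the exponential formula for cycle indices, $\sum_{d \ge 0} \big( \tfrac{1}{d!} \sum_{\sigma \in S_d} \prod_c \mathcal{C}_c(y) \big) t^d = \exp \big( \sum_{i \ge 1} \mathcal{C}_i(y) t^i / i \big)$. Using $\mathcal{C}_i(y) = \alpha^i + \beta^i$, where $\alpha, \beta$ are the two roots of $z^2 - yz + 1$ (so $\alpha + \beta = y$ and $\alpha\beta = 1$; this is the Chebyshev fact $\mathcal{C}_i(2\cos\theta) = 2\cos(i\theta)$ recast), the exponent telescopes:
\begin{align*}
	\sum_{i \ge 1} \frac{\mathcal{C}_i(y)}{i} t^i &= -\log(1 - \alpha t) - \log(1 - \beta t) \\
	&= -\log\big( 1 - (\alpha+\beta) t + \alpha\beta\, t^2 \big) = -\log(1 - y t + t^2).
\end{align*}
Exponentiating gives $(1 - yt + t^2)^{-1}$, which is exactly the generating function $\sum_{d \ge 0} \mathcal{P}_d(y) t^d$ arising from the three-term recurrence $\mathcal{P}_{d} = y \mathcal{P}_{d-1} - \mathcal{P}_{d-2}$. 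Comparing coefficients of $t^d$ yields the polynomial identity, and substituting $y = \mathcal{C}_n(x)$ proves the theorem; together with the equivalence already established via the path identity, this settles Conjecture \ref{conj:Hall}.

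I expect the main obstacle to be the clean evaluation of the exponent, i.e. proving $\sum_{i\ge1} \mathcal{C}_i(y) t^i / i = -\log(1 - yt + t^2)$; everything upstream (Propositions \ref{Prop:WARMUP} and \ref{prop:comm}) and the bookkeeping of cycle types are routine. A purely combinatorial alternative to the generating-function step is also available: the identity $d!\,\mathcal{P}_d(y) = \sum_{\sigma \in S_d} \mathcal{M}_{\coprod_c C_c}(y)$ can be proved by a sign-reversing, weight-preserving involution matching signed matchings of the disjoint cycles $\coprod_c C_c$ against signed matchings of $P_d$, in the same spirit as the metacycle argument of Proposition \ref{prop:comm}; but the exponential-formula computation is shorter and self-contained.
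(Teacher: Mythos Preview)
Your proposal is correct, and through the substitution $y=\mathcal{C}_n(x)$ it follows the paper exactly: both arguments invoke Proposition~\ref{Prop:WARMUP} and then Proposition~\ref{prop:comm} to reduce the theorem to the single-variable identity $d!\,\mathcal{P}_d(y)=\sum_{\sigma\in S_d}\prod_c \mathcal{C}_{|c|}(y)$. The two proofs diverge only at this last step. The paper expands each side in powers of $y$ and compares coefficients, reducing to the counting identity $d!\,a(P_d,m)=\sum_{\sigma\in S_d} a(G_\sigma,m)$ (their Proposition~\ref{BijGP}), which they then prove by an explicit bijection between matched permutations in two-line notation and matchings on the functional digraphs $G_\sigma$. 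You instead recognize the right-hand side as the cycle index of $S_d$ evaluated at $p_i=\mathcal{C}_i(y)=\alpha^i+\beta^i$, apply the exponential formula, and sum the logarithmic series to get $(1-yt+t^2)^{-1}=\sum_d \mathcal{P}_d(y)t^d$. Your route is shorter and entirely algebraic; the paper's route is longer but deliberately bijective, in keeping with the ``combinatorial proof'' advertised in the title. Amusingly, the ``purely combinatorial alternative'' you sketch at the end---a sign-reversing involution matching $P_d$ against $\coprod_c C_{|c|}$---is precisely what the paper carries out in Proposition~\ref{BijGP}.
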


We do this by expanding the right- and left-hand sides into polynomials of $\mathcal{C}_n(x)$ and comparing coefficients. On the right-hand side, we simply use the definition:

$$ \mathcal{P}_d (\mathcal{C}_n(x)) = \sum_{m\geq 0} (-1)^m a(P_d,m)(\mathcal{C}_n(x))^{d-2m}. $$

To handle the left-hand side, we first recall Corollary \ref{Prop:WARMUP} from Section \ref{section:Comp}:

$$
	\mathcal{C}_{n,d}(x)= \frac{1}{d!}\sum_{\substack{\sigma\in S_d \\ \text{cyc}(\sigma)=\mu}} \prod\limits_{i=1}^{k} \mathcal{C}_{n\mu_{i}}(x)
$$

Then, using the result of Section \ref{subsection:Commute} and the definition of the matching polynomial, we obtain

\begin{align*}
	\mathcal{C}_{n,d}(x)
    &=\frac{1}{d!}\sum_{\substack{\sigma\in S_d \\ \text{cyc}(\sigma)=\mu}} \prod\limits_{i=1}^{k} \mathcal{C}_{\mu_{i}}(\mathcal{C}_n(x)) \\
	&=\frac{1}{d!}\sum_{\substack{\sigma\in S_d \\ \text{cyc}(\sigma)=\mu}} \prod\limits_{i=1}^{k} \left(\sum_{\ell \geq 0}(-1)^\ell a(C_{\mu_i},\ell)(\mathcal{C}_n(x))^{\mu_i-2\ell}\right).
\end{align*}

Expanding the inside product, we obtain a sum over compositions. Recalling that given a composition $\alpha$, $S(\alpha)=\alpha_1+\cdots+\alpha_n$, we have

\begin{align*}
	\mathcal{C}_{n,d}(x)
	&=\frac{1}{d!}\sum_{\substack{\sigma\in S_d \\ \text{cyc}(\sigma)=\mu}}\left(\sum_{\alpha=(\alpha_1,\dots,\alpha_{k})}(-1)^{S(\alpha)}\left(\prod_{i=1}^{k}a(C_{\mu_i},\alpha_i)\right)(\mathcal{C}_n(x))^{d-2S(\alpha)}\right) \\
    &=\frac{1}{d!}\sum_{\substack{\sigma\in S_d \\ \text{cyc}(\sigma)=\mu}}\sum_{m\geq 0}~\sum_{\alpha \vDash m}(-1)^{m}(\mathcal{C}_n(x))^{d-2m}\left(\prod_{i=1}^{k}a(C_{\mu_i},\alpha_i)\right).
\end{align*}

Notice that $\sum_{\alpha\vDash m}\prod_{i=1}^k a(C_{\mu_i},\alpha_i)$ is the number of matchings of $G_\sigma$ with $m$ edges, i.e. $a(G_\sigma,m)$. Substituting this into the sum eliminates the dependence of the summands on $\alpha$. In particular, we conclude that
$$
	\mathcal{C}_{n,d}(x) =\frac{1}{d!}\sum_{\sigma\in S_d}\left(\sum_{\alpha \vDash m}(-1)^{m}a(G_\sigma,m)(\mathcal{C}_n(x))^{d-2m}\right).
$$

Comparing coefficients with the right-hand side calculation reduces solving Theorem \ref{thm:main} to its combinatorial core, namely it now suffices to show $d!\, a(P_d,m)=\sum_{\sigma\in S_d}a(G_\sigma,m)$. 

\subsection{Bijective conclusion}

Given a permutation $\sigma$ on any set $S$, we define $G_\sigma$ as a directed graph having vertices $S$ and an edge from each $s$ to its corresponding $\sigma(s)$. Observe that such a graph may have two-cycles, and that the directedness is necessary to distinguish $G_\sigma$ from $G_{\sigma^{-1}}$.


\begin{prop}\label{BijGP}
$d!\, a(P_d,m) = \sum\limits_{\sigma\in S_d} a(G_\sigma,m)$
\end{prop}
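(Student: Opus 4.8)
The plan is to prove the identity bijectively, by realizing each side as the cardinality of an explicit set of pairs and exhibiting a size-preserving correspondence between them. Write $A$ for the set of pairs $(\pi, M)$ with $\pi \in S_d$ and $M$ a size-$m$ matching of the path $P_d$, so that $|A| = d!\,a(P_d,m)$; and write $B$ for the set of pairs $(\sigma, M')$ with $\sigma \in S_d$ and $M'$ a size-$m$ matching of the directed graph $G_\sigma$, so that $|B| = \sum_{\sigma \in S_d} a(G_\sigma, m)$. Here a matching $M'$ of $G_\sigma$ is a set of $m$ pairwise vertex-disjoint arcs $a \to \sigma(a)$; a fixed point of $\sigma$ contributes a self-loop that is never usable, matching the convention $a(C_1,j)=0$ for $j\geq 1$, while a transposition contributes a digon with two distinct usable arcs, matching $a(C_2,1)=2$. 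As a sanity check one records the elementary fact $a(P_d,m)=\binom{d-m}{m}$, from which both $|A|$ and $|B|$ equal $\frac{d!\,(d-m)!}{m!\,(d-2m)!}$; the real content is to make this equality bijective.

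The key device on the $B$-side will be edge contraction. Given $(\sigma, M') \in B$, I would contract each arc $a_t \to b_t = \sigma(a_t)$ of $M'$, identifying its two endpoints into a single super-node. Because $\sigma$ is a bijection and the $m$ arcs are vertex-disjoint, every node of the resulting $(d-m)$-element set $N$ — consisting of the $m$ super-nodes together with the $d-2m$ untouched elements of $[d]$ — still has in-degree and out-degree one, so contraction produces a permutation $\bar\sigma \in \mathrm{Sym}(N)$ (a transposition-arc contracts to a self-loop, i.e.\ a harmless fixed point of $\bar\sigma$). Recording also the underlying arc-set $\mathcal{A} = \{a_t \to b_t\}$, I claim $(\sigma, M') \mapsto (\mathcal{A}, \bar\sigma)$ is a bijection from $B$ onto the set of pairs consisting of a size-$m$ arc-set on $[d]$ and a permutation of its node set $N$: the inverse simply un-contracts each super-node $[a_t\, b_t]$ back into the arc $a_t \to b_t$, routing the incoming edge into $a_t$ and the outgoing edge out of $b_t$, which recovers $\sigma$ uniquely.

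On the $A$-side I would carry out the parallel decomposition on the path. The matching $M$ breaks the linearly ordered vertices of $P_d$ into $d-m$ blocks read left to right — $m$ dominoes and $d-2m$ singletons — and $\pi$ labels them; reading $\pi(p_t) \to \pi(p_t+1)$ off each domino produces an arc-set $\mathcal{A}$ on $[d]$, while the labelled blocks inherit a linear order, i.e.\ a total order on the same node set $N$. This assignment $(\pi, M) \mapsto (\mathcal{A}, \text{total order on } N)$ is a bijection, since from $\mathcal{A}$ and an ordering of its blocks one rebuilds the word $\pi$ and the domino positions $M$ uniquely (distinct blocks occupy disjoint position ranges, so the reconstructed dominoes automatically form a valid matching of $P_d$). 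Finally, for each fixed $\mathcal{A}$ the total orders on $N$ and the permutations of $N$ are equinumerous, both numbering $(d-m)!$, and fixing any reference order on $N$ turns one into the other; composing the three correspondences yields $A \leftrightarrow B$. I expect the main obstacle to be the middle step: verifying that contracting the matched arcs of the permutation digraph genuinely returns a permutation in every case — the two-cycle/self-loop degeneracy being the delicate point — and that contraction is exactly inverse to un-contraction, so that the choice of arcs is cleanly separated from the residual permutation in a way mirroring the block decomposition of the path.
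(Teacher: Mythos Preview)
Your argument is correct. It is essentially the two-step factorization that the paper records immediately \emph{after} its main proof (Propositions~\ref{BijKP} and~\ref{BijGK}): your intermediate object ``arc-set $\mathcal{A}$ on $[d]$ together with a permutation of the $(d-m)$-element contracted node set $N$'' is exactly the paper's ``directed $m$-matching of $K_d$ together with a permutation of the non-target vertices,'' under the identification of each super-node $[a_t\,b_t]$ with its source $a_t$; and your contraction $\sigma\mapsto\bar\sigma$ is the inverse of the paper's $\sigma=\sigma'\tau_M$ construction. The paper's headline proof of Proposition~\ref{BijGP} packages the composite as a single direct bijection using two-line notation (relabel only the left ends in increasing order, then twist by $\tau_M$), which is slicker to state but encodes the same decomposition. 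Your presentation has the advantage that the contraction picture makes the role of the matched arcs transparent and handles the digon/self-loop degeneracies cleanly; the paper's direct version has the advantage of producing an explicit $\sigma$ from $(\pi,M)$ in one line without naming the intermediate set.
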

\begin{proof}

The left-hand side counts diagrams which are formed by taking a permutation in two-line notation, drawing an rightward-oriented path down the middle, and choosing a matching on the graph.

\begin{center}
			\begin{tikzpicture}[scale=.4,auto=left,every node/.style={circle,draw,minimum size=4.5pt,inner sep=2.5pt,fill=blue!20}]

			\node[draw=none,fill=none] at (0,2) {$0$};
			\node[draw=none,fill=none] at (4,2) {$1$};
			\node[draw=none,fill=none] at (8,2) {$2$};
			\node[draw=none,fill=none] at (12,2) {$3$};
			\node[draw=none,fill=none] at (16,2) {$4$};
			\node[draw=none,fill=none] at (20,2) {$5$};	

			\node (n1) at (0,0+1) {} ;
			\node (n3) at (4,0+1) {} ;
			\node (n6) at (8,1) {} ;
			\node (n4) at (12,0+1) {} ;
			\node (n2) at (16,0+1) {} ;
  			\node (n5) at (20,0+1) {} ;

			\node[draw=none,fill=none] at (0,0) {$0$};
			\node[draw=none,fill=none] at (4,0) {$2$};
			\node[draw=none,fill=none] at (8,0) {$5$};
			\node[draw=none,fill=none] at (12,0) {$3$};
			\node[draw=none,fill=none] at (16,0) {$1$};
			\node[draw=none,fill=none] at (20,0) {$4$};	
	             		
			\draw[very thick, ->] (n1) to (n3);
			\draw[gray,->] (n3) to (n6);
			\draw[very thick, ->] (n6) to (n4);
			\draw[gray, ->] (n4) to (n2);                		            		
			\draw[gray, ->] (n2) to (n5);
			\end{tikzpicture}
\end{center}

To see why this is counted by the right-hand side as well, remove the top row of labels. Then the collection of labels of the \textbf{left ends}, which are all vertices except for the rightmost vertices in edges of the matching. In the example above, the labels on the left ends are $\{0,1,4,5\}$. Now fill in the top row with the left end labels, in increasing order, only over the left ends.

\begin{center}
			\begin{tikzpicture}[scale=.4,auto=left,every node/.style={circle,draw,minimum size=4.5pt,inner sep=2.5pt,fill=blue!20}]

			\node[draw=none,fill=none] at (0,2) {$0$};
			\node[draw=none,fill=none] at (4,2) {};
			\node[draw=none,fill=none] at (8,2) {$1$};
			\node[draw=none,fill=none] at (12,2) {};
			\node[draw=none,fill=none] at (16,2) {$4$};
			\node[draw=none,fill=none] at (20,2) {$5$};	

			\node (n1) at (0,0+1) {} ;
			\node (n3) at (4,0+1) {} ;
			\node (n6) at (8,1) {} ;
			\node (n4) at (12,0+1) {} ;
			\node (n2) at (16,0+1) {} ;
  			\node (n5) at (20,0+1) {} ;

			\node[draw=none,fill=none] at (0,0) {$0$};
			\node[draw=none,fill=none] at (4,0) {$2$};
			\node[draw=none,fill=none] at (8,0) {$5$};
			\node[draw=none,fill=none] at (12,0) {$3$};
			\node[draw=none,fill=none] at (16,0) {$1$};
			\node[draw=none,fill=none] at (20,0) {$4$};	
	             		
			\draw[very thick, ->] (n1) to (n3);
			\draw[gray,->] (n3) to (n6);
			\draw[very thick, ->] (n6) to (n4);
			\draw[gray, ->] (n4) to (n2);                		            		
			\draw[gray, ->] (n2) to (n5);
			\end{tikzpicture}
\end{center}

For the remaining vertices, give them the same label on the top row as on the bottom. Call the resulting permutation $\sigma'$. The permutation $\sigma$ that the right-hand sum keeps track of is $\sigma'\tau_M$, where $M$ is the matching and $\tau_M$ is the involution which fixes the vertices not involved in the matching, while swapping the left and right vertices of each edge. This $\sigma$ is chosen to ensure that $G_\sigma$ contains all directed edges of $M$.
        
\begin{center}
			\begin{tikzpicture}[scale=.4,auto=left,every node/.style={circle,draw,minimum size=4.5pt,inner sep=2.5pt,fill=blue!20}]
			\node (n1) at (0,0+1) {} ;
			\node (n3) at (4,0+1) {} ;
			\node (n6) at (8,1) {} ;
			\node (n2) at (12,0+1) {} ;
           	\node (n4) at (10,2.81+1) {} ;
  			\node (n5) at (16,0+1) {} ;

			\node[draw=none,fill=none] at (0,0) {$0$};
			\node[draw=none,fill=none] at (4,0) {$2$};
			\node[draw=none,fill=none] at (8,0) {$5$};
			\node[draw=none,fill=none] at (10,2.81) {$3$};
			\node[draw=none,fill=none] at (12,0) {$1$};
			\node[draw=none,fill=none] at (16,0) {$4$};

         		\draw[->] (n3) to (n1);
			\draw[->, very thick] (n1)..controls (2,3) ..(n3);
			\draw[->,very thick] (n6) to (n4);
			\draw[->] (n4) to (n2);                		            			
			\draw[->] (n2) to (n6);
			\draw[->] (n5) .. controls (14,3) and (18,3) .. (n5);
			\end{tikzpicture}
\end{center}

The right-hand side clearly counts the total number of matchings with $m$ edges in $G_\sigma$ for all permutations $\sigma$, and this construction shows that we can obtain these from diagrams counted on the left-hand side. It is clear that no matching is counted by more than one diagram, since the matchings themselves must be identical, and therefore the two $\sigma'$, hence the two original permutations, must be identical. Moreover, we obtain every matching possible from this construction, since given a $\sigma$ and $M$, we obtain this from $\sigma'=\sigma\tau_M$ and $M$. From $M$ we can read off the collection of left ends, and thus recover the original permutation.

\end{proof}

The bijection in Proposition \ref{BijGP} is essentially a concatenation of two others (with a slight tweak for readability). We include these bijections for completeness.

\begin{prop}\label{BijKP}
$2^m(d-m)!\, a(K_d,m)=d!\, a(P_d,m)$.
\end{prop}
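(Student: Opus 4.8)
The plan is to prove this by an explicit bijection, treating it as a warm-up factorization of the bijection in Proposition \ref{BijGP}. First I would fix combinatorial interpretations of both sides. The right-hand side $d!\,a(P_d,m)$ counts pairs consisting of a word $s_1 s_2 \cdots s_d$ that is a permutation of $[d]$ (the $d!$ factor, thought of as a linear arrangement of the $d$ symbols in positions $1,\dots,d$) together with a size-$m$ matching of the path $P_d$ on those positions (the $a(P_d,m)$ factor). Note that the matching lives on the positions and is independent of the symbols, so the product count is genuinely $d!\,a(P_d,m)$.

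For the left-hand side I would read $2^m (d-m)!\,a(K_d,m)$ as the number of the following pieces of data: first, a matching $M$ of $K_d$ of size $m$ (the $a(K_d,m)$ factor); second, an orientation of each of the $m$ edges of $M$, turning them into ordered pairs (the $2^m$ factor); and third, a linear ordering of the $d-m$ \emph{blocks} obtained by regarding each oriented edge as one block and each of the $d-2m$ unmatched vertices as a singleton block (the $(d-m)!$ factor, since there are $m+(d-2m)=d-m$ blocks, all distinguishable).

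The bijection then goes as follows. I would concatenate the blocks in their chosen linear order to form a word $s_1\cdots s_d$, which is a permutation of $[d]$; and I would declare a path-edge $\{i,i+1\}$ to be matched exactly when positions $i$ and $i+1$ are the two entries of a common oriented-edge block. Because the edges of $M$ are vertex-disjoint, the resulting matched path-edges are pairwise non-adjacent, so this really is a matching of $P_d$. The inverse reads off the matched edges of a given (word, path-matching) pair, recovers each matched pair $(s_i,s_{i+1})$ as an oriented edge of $K_d$ and each unmatched position as a singleton, and recovers the block order by reading left to right.

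The main thing to verify, and the only real obstacle, is that this assignment is well-defined and invertible, i.e.\ that the three pieces of data on the left reconstruct a (permutation, path-matching) pair with no loss and no overcounting. The key checks are that orientation together with block order exactly encode the positional word (so that no information beyond the $d!$ arrangements is lost), that non-adjacency of the matched path-edges is equivalent to vertex-disjointness of the underlying $K_d$-edges, and that the block count is $d-m$ rather than $d-2m$. Once these are in place the map is a bijection and the identity follows; as a sanity check, both sides equal $\frac{d!\,(d-m)!}{m!\,(d-2m)!}$ using $a(K_d,m)=\frac{d!}{2^m m!\,(d-2m)!}$ and $a(P_d,m)=\binom{d-m}{m}$.
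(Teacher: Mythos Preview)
Your argument is correct: the map that concatenates the linearly ordered blocks (oriented edges and singletons) into a word and marks the within-block position pairs as matched is a genuine bijection, and the three checks you list are exactly what is needed.

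The paper's proof has the same core idea---matched path edges correspond to oriented edges of $K_d$ via the symbols they carry---but parametrizes the $(d-m)!$ factor differently. There the extra data is a permutation of the $d-m$ \emph{non-target} vertices (sources and unmatched vertices) rather than a linear order on the $d-m$ blocks, and the inverse is recovered by a greedy traversal: starting at the smallest label, follow the directed edge if you are at a source, otherwise jump to the smallest unvisited label. Your block-ordering encoding is more transparent and makes the inverse immediate (just read the word left to right), whereas the paper's encoding ties in more directly with the two-line-notation diagrams used in the surrounding Propositions~\ref{BijGP} and~\ref{BijGK}. Either parametrization works; yours is the cleaner stand-alone argument.
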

\begin{proof}

As before, the right-hand side counts diagrams which are formed by taking a permutation in two-line notation, drawing an rightward-oriented path down the middle, and choosing a matching on the graph. Equivalently, it counts paths with double labels on each of the left ends, where each of the lower labels of a left end is the upper label of some (usually different) left end.

\begin{center}
			\begin{tikzpicture}[scale=.4,auto=left,every node/.style={circle,draw,minimum size=4.5pt,inner sep=2.5pt,fill=blue!20}]

			\node[draw=none,fill=none] at (0,2) {$0$};
			\node[draw=none,fill=none] at (4,2) {};
			\node[draw=none,fill=none] at (8,2) {$1$};
			\node[draw=none,fill=none] at (12,2) {};
			\node[draw=none,fill=none] at (16,2) {$4$};
			\node[draw=none,fill=none] at (20,2) {$5$};	

			\node (n1) at (0,0+1) {} ;
			\node (n3) at (4,0+1) {} ;
			\node (n6) at (8,1) {} ;
			\node (n4) at (12,0+1) {} ;
			\node (n2) at (16,0+1) {} ;
  			\node (n5) at (20,0+1) {} ;

			\node[draw=none,fill=none] at (0,0) {$0$};
			\node[draw=none,fill=none] at (4,0) {$2$};
			\node[draw=none,fill=none] at (8,0) {$5$};
			\node[draw=none,fill=none] at (12,0) {$3$};
			\node[draw=none,fill=none] at (16,0) {$1$};
			\node[draw=none,fill=none] at (20,0) {$4$};	
	             		
			\draw[very thick, ->] (n1) to (n3);
			\draw[gray,->] (n3) to (n6);
			\draw[very thick, ->] (n6) to (n4);
			\draw[gray, ->] (n4) to (n2);                		            		
			\draw[gray, ->] (n2) to (n5);
			\end{tikzpicture}
\end{center}

To see why these are counted by the left-hand side as well, again remove the upper labels. Remove the orientations for all edges not in the matching, and fill in the mising edges to interpret the matching as a directed matching of $K_d$.

\begin{center}
			\begin{tikzpicture}[scale=.4,auto=left,every node/.style={circle,draw,minimum size=4.5pt,inner sep=2.5pt,fill=blue!20}]

			\foreach \a in {0,60,...,300} { 
			\node (n\a) at ($(\a:4cm)+(12,3)$) {}; 
			}
			\foreach \from/\to in {0/60,0/180,0/240,0/300,60/120,60/180,60/240,60/300,120/180,120/240,120/300,180/240,180/300,240/300} { 
			 \draw[gray] (n\from) to (n\to);
			}
			\foreach \from/\to in {0/120,300/180} { 
			 \draw[thick,->] (n\from) to (n\to);
			}

			\node[draw=none,fill=none] at ($(n0)+(1+0.2,0-0.1)$) {$0;0$};
			\node[draw=none,fill=none] at ($(n60)+(0,1)$) {$4;1$}; 
			\node[draw=none,fill=none] at ($(n120)+(0,1)$) {$2$}; 
			\node[draw=none,fill=none] at ($(n180)+(-1,0)$) {$3$}; 	
			\node[draw=none,fill=none] at ($(n240)+(0,-1)$) {$5;4$}; 	
			\node[draw=none,fill=none] at ($(n300)+(0,-1)$) {$1;5$}; 	
			\end{tikzpicture}
\end{center}

The left-hand side clearly counts the total number of directed matchings with $m$ edges on $K_d$ together with a permutation of the vertices which are not targets (arrow points) of the edges. This construction shows that such objects can be constructed from the diagrams counted by the right-hand side. There is an inverse construction as follows: ignore the bottom label (or the second label in the example object above). Starting with vertex 1, any time we are at the source of an edge in the directed matching, go to the target, and otherwise we go to the smallest-labeled vertex we have not gone to yet.
\end{proof}

\begin{prop}\label{BijGK}
$2^m (d-m)!\, a(K_d,m) = \sum\limits_{\sigma\in S_d} a(G_\sigma,m)$.
\end{prop}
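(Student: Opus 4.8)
The plan is to obtain Proposition \ref{BijGK} by simply chaining the two preceding propositions, since all three statements assert equalities among the same pool of quantities. Concretely, Proposition \ref{BijKP} gives $2^m(d-m)!\,a(K_d,m) = d!\,a(P_d,m)$, and Proposition \ref{BijGP} gives $d!\,a(P_d,m) = \sum_{\sigma\in S_d} a(G_\sigma,m)$; composing these equalities yields $2^m(d-m)!\,a(K_d,m) = \sum_{\sigma\in S_d} a(G_\sigma,m)$ at once. Because both earlier proofs are genuinely bijective rather than merely numerical, this also realizes the identity as a bijection, namely the composite of the two bijections already constructed.

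For completeness I would also spell out the composite map explicitly, which is really the reason for presenting these three propositions together: the bijection of Proposition \ref{BijGP} factors as the one of Proposition \ref{BijKP} followed by the one we are now isolating. Starting from the left-hand data --- a directed matching $M$ of size $m$ on $K_d$ together with a permutation of the $d-m$ non-target vertices --- I would first apply the inverse construction from the proof of Proposition \ref{BijKP}: beginning at vertex $1$, traverse a matched edge to its target whenever one is available and otherwise advance to the smallest not-yet-visited vertex, thereby reconstructing a two-line path diagram on $P_d$ carrying the same matching $M$. I would then feed this diagram into the forward construction from the proof of Proposition \ref{BijGP}, reading the top row off the left ends in increasing order, setting $\sigma'$ accordingly and taking $\sigma = \sigma'\tau_M$, so that $G_\sigma$ contains exactly the directed edges of $M$ and inherits a size-$m$ matching. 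Tracing the running example through both stages confirms that the number of matched edges, and hence the parameter $m$, is preserved throughout.

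The only point requiring care is the compatibility of the intermediate path-diagram representation between the two bijections: the diagram produced by the inverse map of Proposition \ref{BijKP} must be exactly of the form consumed by the forward map of Proposition \ref{BijGP}, up to the ``slight tweak for readability'' already flagged in the text (the relabeling of the left ends). Since both constructions operate on the identical class of two-line diagrams with a chosen matching, and since each step is already verified to be a size-preserving bijection in its own proof, the composite is automatically a size-preserving bijection, and no new verification of injectivity, surjectivity, or weight preservation is needed. I expect this bookkeeping --- confirming that no off-by-one shift in labels or orientation is introduced when the two maps are glued together --- to be the main, and only mild, obstacle.
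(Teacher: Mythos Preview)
Your proof is correct: composing the bijections of Propositions~\ref{BijKP} and~\ref{BijGP} certainly yields the desired equality, and since both are established bijectively, so is their composite. However, your route differs from the paper's. The paper gives a direct one-step bijection that never passes through the path-diagram intermediate: given a directed matching $M$ of $K_d$ and a permutation of the non-target vertices, simply extend that permutation trivially to all of $[d]$ (fixing each target) to obtain $\sigma'$, and set $\sigma=\sigma'\tau_M$; then $M$ sits inside $G_\sigma$. The inverse is just $\sigma'=\sigma\tau_M$ followed by restriction to non-targets.

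What each approach buys: your composite requires no new verification at all---injectivity, surjectivity, and size-preservation are inherited automatically---but it reconstructs a two-line path diagram via the greedy traversal of Proposition~\ref{BijKP} only to strip it back down in the next step. The paper's direct construction shows that this intermediate is unnecessary: the essential content of the composite collapses to ``extend trivially, then twist by $\tau_M$,'' which is both shorter and makes the structure of the bijection transparent on its own terms. In effect the paper has already done the simplification you flag as the ``only point requiring care,'' and the result is that no path diagram appears at all.
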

\begin{proof}
As before, the left-hand side counts directed matchings $M$ of $K_d$ together with a permutation on the vertices which are not targets of the edges in the matching. Extend the permutation to a permutation $\sigma'$ on all the vertices, trivially in the sense that each target is sent to itself. Define $\sigma=\sigma'\tau_M$, where $\tau_M$ is defined as in Proposition \ref{BijGP}; as before $G_\sigma$ contains $M$ among its edge set, and thus we have obtained a matching counted by the right-hand side.

The inverse construction is clear: given $\sigma$ and matching $M$ of $G_\sigma$, let $\sigma'=\sigma\tau_M$, interpret $M$ as a directed matching of $K_d$, and remove the second (necessarily duplicate) label on the targets.
\end{proof}

\bigskip
\bibliographystyle{plain}

\bibliography{myreferences}

\end{document}